\documentclass[preprint]{imsart}

\usepackage{natbib}
\bibliographystyle{imsart-nameyear}
\bibpunct{(}{)}{,}{a}{}{;}

\usepackage[T1]{fontenc}
\usepackage[latin1]{inputenc}
\usepackage[english]{babel}
\usepackage{amsmath,amsthm,amssymb,amsfonts,epic,latexsym,hyperref
}
\usepackage{mathabx}
\usepackage[dvips]{graphicx}
\usepackage[usenames]{color}


\usepackage{lipsum}
\usepackage{rotating}
\usepackage{dsfont}
\usepackage{geometry}
\usepackage{stmaryrd}
\usepackage{ifthen}
\usepackage[nottoc]{tocbibind}
\usepackage{verbatim}
\usepackage{tabularx}
\usepackage{framed}
\usepackage{longtable}
\usepackage{tabu}
\usepackage{mathtools}

\allowdisplaybreaks

\newcommand\un{\mathds{1}}
\newcommand\eps{\varepsilon}
\renewcommand\phi{\varphi}
\newcommand\pro[1]{P\left(#1\right)}
\newcommand\esp[1]{\mathbb{E}\left[#1\right]}
\newcommand\proc[2]{\mathbb{P}\left(\left.#1\right|#2\right)}

\newcommand\espc[2]{\mathbb{E}\left[\left.#1\right|#2\right]}

\newcommand\uno[1]{\un_{\left\{#1\right\}}}

\newcommand\indiq{\un}

\newcounter{comptetape}
\setcounter{comptetape}{1}

\newtheorem{thm}{Theorem}[section]
\newtheorem{prop}[thm]{Proposition}
\newtheorem{cor}[thm]{Corollary}
\newtheorem{assu}{Assumption}
\newtheorem{lem}[thm]{Lemma}
\newtheorem{rem}[thm]{Remark}
\newtheorem{defi}[thm]{Definition}

\newcommand\deriv[4]{\frac{\partial\ifthenelse{\equal{#1}1}{}{^{#1}} \bar{X}_{#2}^{#3,#4}(x)}{\partial x_{k,n_k}\ifthenelse{\equal{#1}1}{}{^{#1}}}}
\newcommand\derivf[2]{\left(\sqrt{f_{#2}}\right)^{\ifthenelse{\equal{#1}{1}}{'}{(#1)}}}

\newcommand\diff[3]{\frac{\partial\ifthenelse{\equal{#2}1}{}{^{#2}}#1}{\partial#3\ifthenelse{\equal{#2}1}{}{^{#2}}}}

\newcommand\n{\mathbb{N}}
\renewcommand\r{\mathbb{R}}
\newcommand\R{\r}
\newcommand\E{\mathbb{E}}

\newcommand\cB{\mathcal B}
\newcommand\cP{\mathcal P}

\renewcommand\ll{\left}
\newcommand\rr{\right}
\newcommand\D{D}
\newcommand\N{\n}
\newcommand\bN{\pi}
\newcommand\W{\mathcal{W}}
\newcommand\auxY{\widetilde{X}}
\newcommand\limY{\bar{X}}
\newcommand\mesuremu{\nu}
\newcommand\limYu{\widehat{X}}
\newcommand\limYd{\widecheck{X}}
\newcommand\limmuu{\widehat{\mu}}
\newcommand\limmud{\widecheck{\mu}}

\begin{document}

\begin{frontmatter}

\title{Conditional propagation of chaos for mean field systems of interacting neurons}

\runtitle{Conditional propagation of chaos}

\begin{aug}

\author{\fnms{Xavier} \snm{Erny}\thanksref{m4}\ead[label=e4]{xavier.erny@univ-evry.fr}},
\author{\fnms{Eva}
\snm{L\"ocherbach}\thanksref{m2}\ead[label=e5]{locherbach70@gmail.com}}
\and \author{\fnms{Dasha} \snm{Loukianova}\thanksref{m4} \ead[label=e4]{dasha.loukianova@univ-evry.fr}}

\address{
  \thanksmark{m4}Universit\'e Paris-Saclay, CNRS, Univ Evry, Laboratoire de Math\'ematiques et Mod\'elisation d'Evry, 91037, Evry, France\\
  \thanksmark{m2}Statistique, Analyse et Mod\'elisation Multidisciplinaire, Universit\'e Paris 1 Panth\'eon-Sorbonne, EA 4543 et FR FP2M 2036 CNRS}

\runauthor{X. Erny et al.}

 \end{aug}

\begin{abstract}
We study the stochastic system of interacting neurons introduced in
 \cite{de_masi_hydrodynamic_2015} and in \cite{fournier_toy_2016} in a diffusive scaling.
The system consists of $N$ neurons, each spiking randomly with rate depending on its membrane potential. 
At its spiking time, the potential of the spiking neuron  is reset to $0$ and all other neurons
receive an additional amount of potential which is a centred random variable of order $ 1 / \sqrt{N}.$ In between successive spikes, each neuron's potential follows a 
deterministic flow. We prove the convergence of the system, as $N \to \infty$, to a limit nonlinear
jumping stochastic differential 
equation driven by Poisson random measure and an additional Brownian motion $W$  which is created by the central limit theorem. This Brownian motion is underlying each particle's motion and induces a common noise factor for all neurons in the limit system. 
Conditionally on $W,$ the different neurons are independent in the limit system. This is the  {\it conditional propagation of chaos} property. We prove the well-posedness of the limit equation by adapting the ideas of \cite{graham_mckean-vlasov_1992-1} to our frame. To prove the convergence in distribution of the finite system to the limit system, we introduce a new martingale problem that is well suited for our framework. The uniqueness of the limit is deduced from the exchangeability of the underlying system.
\end{abstract}

\begin{keyword}[class=MSC]
	\kwd{60J75}
  \kwd{60K35}
	\kwd{60G55}
	 \kwd{60G09}
  \end{keyword}

\begin{keyword}
 \kwd{Multivariate nonlinear Hawkes processes with variable length memory}
 \kwd{Mean field interaction}
 \kwd{Piecewise deterministic Markov processes}
 \kwd{Interacting particle systems}
 \kwd{Propagation of chaos}
 \kwd{Exchangeability}
 \kwd{Hewitt Savage theorem}
\end{keyword}

\end{frontmatter}

\section*{Introduction}

This paper is devoted to the study of  the Markov process $X^N_t = (X^{N, 1 }_t, \ldots , X^{N, N}_t )$ taking values in $\r^N$  and having generator $A^N$ which is defined for any smooth test function $ \varphi : \R^N \to \R $ by 
$$
A^N  \varphi ( x) = - \alpha \sum_{i=1}^N \partial_{x^i} \varphi (x) x^i + \sum_{i=1}^N f (x^i) \int_\R \nu ( du ) \left( \varphi ( x - x^i e_i + \sum_{j\neq i } \frac{u}{\sqrt{N}} e_j ) - \varphi ( x) \right) ,
$$
where $ x= (x^1, \ldots, x^N) $ and where $ e_j $ denotes the $j-$th unit vector in $ \R^N.$ In the above formula, $ \alpha > 0 $ is a fixed parameter and $ \nu $ is a centred probability measure on $\R$ having a second moment.

Informally, the process $(X^{N,j})_{1\leq j\leq N}$ solves
\begin{equation}
\label{eq:dynintro}
X^{N, i}_t = X^{N,i}_0 - \alpha \int_0^t X^{N, i}_s   ds -  \int_0^t X^{N, i}_{s-} dZ^{N,i}_s+\frac{1}{\sqrt{N}}\sum_{ j \neq i } \int_0^t U^j(s)dZ^{N,j}_s,
\end{equation} 
where $U^j(s)$ are i.i.d. centred random variables distributed according to $ \nu , $ and where for each $1\leq j\leq N,$ $Z^{N,j}$ is a simple point process on $\r_+$ having stochastic intensity $s\mapsto f\ll(X^{N,j}_{s-}\rr).$

The particle system \eqref{eq:dynintro} is a version of the model of interacting neurons considered in \cite{de_masi_hydrodynamic_2015}, inspired by  \cite{galves_infinite_2013}, and then further studied in  \cite{fournier_toy_2016} and \cite{cormier_long_2019}. 
The system consists of $N$ interacting neurons. In \eqref{eq:dynintro}, $Z^{N,j}_t$ represents the number of spikes emitted by the neuron~$j$ in the interval~$[0,t]$ and $X^{N,j}_t$ the membrane potential of the neuron~$j$ at time~$t$. Spiking occurs randomly following a point process of rate $f (x) $ for any neuron of which the membrane potential equals $x.$  Each time a neuron emits a spike, the potentials of all other neurons receive an additional amount of potential. In \cite{de_masi_hydrodynamic_2015}, \cite{fournier_toy_2016} and \cite{cormier_long_2019} this amount is of order $N^{-1}, $ leading to classical mean field limits as $ N \to \infty .$ On the contrary to this, in the present article we study a {\it diffusive scaling} where each neuron $j$ receives the amount $U/\sqrt{N}$ at spike times $t$ of neuron $i, i \neq j,$ where $U \sim \nu $ is a random variable. The variable $U$ is centred modeling the fact that the synaptic weights are balanced.  Moreover, right after its spike, the potential of the spiking neuron~$i$ is reset to~0, interpreted as resting potential. Finally, in between successive spikes, each neuron has a loss of potential of rate~$\alpha$.

Equations similar to \eqref{eq:dynintro} appear also in the frame of multivariate Hawkes processes with mean field interactions. Indeed, if $\ll(Z^{N,i}\rr)_{1\leq i\leq N}$ is a multivariate Hawkes process where the stochastic intensity of each $Z^{N,i}$ is given by $f\ll(X^N_{t-}\rr)_t$ with
\begin{equation}\label{eq:Xold}
X^{N}_t= e^{- \alpha t } X^N_0 + \frac{1}{\sqrt{N}}\sum_{j=1}^N\int_{0}^te^{-\alpha(t-s)}U^j(s)dZ^{N,j}_s,
\end{equation}
then $X^N$ satisfies
$$X_t^N=X_0^N-\alpha \int_0^tX_s^Nds+\frac{1}{\sqrt{N}}\sum_{j=1}^N\int_0^tU^j(s)dZ^{N,j}_s,$$
which corresponds to equation \eqref{eq:dynintro}  without the big jumps, i.e.\ without the reset to $0$ after each spike.

The above system of interacting Hawkes processes with intensity given by \eqref{eq:Xold} has been studied in our previous paper \cite{erny_mean_2019}. There we have  shown firstly that $X^N$ converges in distribution in $D(\r_+,\r)$ to a limit process $\bar{X}$ solving
\begin{equation}
\label{barX}
d\bar{X}_t=-\alpha\bar{X}_tdt+\sigma\sqrt{f\ll(\bar{X}_t\rr)}dW_t ,
\end{equation}
and secondly that the sequence of multivariate counting processes $\ll(Z^{N,i}\rr)_i$ converges in distribution in $D(\r_+,\r)^{\n^*}$ to a limit sequence of counting processes $\ll(\bar{Z}^i\rr)_i .$ Every $\bar{Z}^{i} $ is driven by its own Poisson random measure and has the same intensity $\ll(f(\bar{X}_{t-}) \rr)_t ,$ where $\bar{X}$ is the strong solution of~\eqref{barX} with respect to some Brownian motion~$W$. Consequently, the processes $\bar{Z}^i$ $(i\geq 1)$ are conditionally independent given the Brownian motion~$W.$ 

In the present paper we add the reset term in~\eqref{eq:dynintro} that forces the potential  $X^{N,i}$ of neuron $i$ to go back to~0 at each jump time of~$Z^{N,i}$.  This models the well-known biological fact  that right after its spike,  the membrane potential 
of the spiking neuron is reset to a resting potential. From a mathematical point of view, this reset to $0$ induces a de-synchronization of the processes $X^{N,i}$ ($1\leq i\leq N$). In terms of Hawkes processes, it means that in \eqref{eq:Xold}, the process $ X^N_t$  has been replaced by 
$$X^{N, i }_t = \frac{1}{\sqrt{N}}\sum_{j=1}^N\int_{L^i_t }^te^{-\alpha(t-s)}U^j(s)dZ^{N,j}_s  + e^{ - \alpha t } X_0^{N, i } \indiq_{ L_t^i = 0} , $$
where $ L^i_t = \sup \{ 0\leq s \le t : \Delta Z^{N, i } _s = 1 \} $
is the last spiking time of neuron $i$ before time $t,$ with the convention $ \sup \emptyset := 0.$  Thus the integral over the past, starting from $0 $ in \eqref{eq:Xold}, is replaced by an integral starting at the last jump time before the present time.  Such processes are termed being of {\it variable  length memory}, in reminiscence of \cite{rissanen_universal_1983}. They are the continuous-time analogues of the model considered in \cite{galves_infinite_2013}, and we are thus considering {\it multivariate Hawkes processes with mean field interactions and variable length memory}.  As a consequence, on the contrary to the situation in \cite{erny_mean_2019}, the point processes $Z^{N,i}$ ($1\leq i\leq N$) do not share the same stochastic intensity. The reset term in \eqref{eq:dynintro} is a jump term that survives in the limit $ N \to \infty .$

Before introducing the exact limit equation for the system~\eqref{eq:dynintro}, let us explain informally how the limit particle system associated to $\ll(X^{N,i}\rr)_{1\leq i\leq N}$ should a priori look like.
Suppose for the moment that we already know that there exists a process $ (\limY^1, \limY^2 , \limY^3, \ldots ) \in \D ( \R_+, \R)^{\n^*} $ such that for all $ K > 0, $ weak convergence $ {\mathcal L }(X^{N, 1, }, \ldots , X^{N, K} ) \to {\mathcal L} ( \limY^1, \ldots, \limY^K) $ in $\D (\R_+, \R )^K ,$ as $ N \to \infty ,$ holds.  In equation~\eqref{eq:dynintro} the only term that depends on~$N$ is the martingale term which is approximately given by
$$M_t^N=\frac{1}{\sqrt{N}}\sum_{j=1}^N\int_0^tU^j(s)dZ^{N,j}_s.$$
Then in the infinite neuron model, each process $\bar{X}^i$ should solve the equation~\eqref{eq:dynintro}, where the term $M_t^N$ is replaced by $M_t:=\underset{N\to\infty}{\lim}M_t^N$. Because of the scaling in $N^{-1/2},$ the limit martingale $M_t$ will be a stochastic integral with respect to some Brownian motion, and its variance  the limit of 
$$\esp{  (M^N_t )^2} =  \sigma^2  \int_0^t \esp{\frac1N \sum_{j=1}^N  f ( X_s^{N, j } ) } ds ,$$ 
where $\sigma^2$ is the variance of $U^j(s).$ Therefore, the limit martingale (if it exists) must be of the form
$$M_t=\sigma\int_0^t\sqrt{\underset{N\to\infty}{\lim}\frac{1}{N}\sum_{j=1}^Nf\ll(X_s^{N,j}\rr)} \, dW_s=\sigma\int_0^t\sqrt{\underset{N\to\infty}{\lim}\mu^N_s(f)}dW_s,$$
where $\mu_s^N$ is the empirical measure of the system $\ll(X^{N,j}_s\rr)_{1\leq j\leq N}$ and $W$ is a one-dimensional standard Brownian motion.

Since the law of  the $N-$particle system $ (X^{N, 1}, \ldots, X^{N, N} ) $ is symmetric, the law of the limit system $ \limY  =   (\limY^1, \limY^2 , \limY^3, \ldots ) $ must be exchangeable, that is, for all finite permutations $\sigma, $ we have that ${\mathcal L} ( \limY^{\sigma ( 1) }, \limY^{\sigma ( 2) } , \ldots ) = {\mathcal L} (\limY).$ In particular, the theorem of Hewitt-Savage, see \cite{hewitt_symmetric_1955}, implies that the random limit 
\begin{equation}\label{eq:limitmu}
 \mu_s  := \lim_{N\to \infty}\frac1N \sum_{i=1}^N \delta_{\limY^i_s } 
\end{equation} 
exists. Supposing that $\mu^N_s$ converges, it necessarily converges towards $\mu_s.$ Therefore,  $\limY $ should solve the limit system 
\begin{equation}
\label{eq:dynlimintro}
\limY^i_t = \limY^i_0 -\alpha   \int_0^t \limY^i_s   ds - \int_0^t \limY^i_{s- } d\bar{Z}^{i}_s + \sigma \int_0^t \sqrt{ \mu_s ( f) } d W_s , i \in \N,
\end{equation}
where each $ \bar{Z}^i $ has intensity $ t \mapsto f ( \limY^i_{t- }),$  and where $\mu_s$ is given by \eqref{eq:limitmu}.

The above arguments are made rigorous in Sections \ref{sec:21} and \ref{sec:22} below. 

Let us briefly discuss the form of the limit equation \eqref{eq:dynlimintro}. 
Analogously to \cite{erny_mean_2019}, the scaling in $N^{-1/2}$ in~\eqref{eq:dynintro} creates a Brownian motion~$W$ in the limit system~\eqref{eq:dynlimintro}. We will show that the presence of this Brownian motion entails a {\it conditional propagation of chaos}, that is the conditional independence of the particles given~$W$. In particular, the limit measure $\mu_s$ will be random. This differs from the classical framework, where the scaling is in $N^{-1}$ (see e.g. \cite{delattre_hawkes_2016}, \cite{ditlevsen_multi-class_2017} in the framework of Hawkes processes, and \cite{de_masi_hydrodynamic_2015}, \cite{fournier_toy_2016} and \cite{cormier_long_2019} in the framework of systems of interacting neurons), leading  to a deterministic limit measure $ \mu_s $ and the true propagation of chaos property implying that  the particles of the limit system are independent. 

This is not the first time that conditional propagation of chaos is studied in the literature; it has already been considered e.g. in \cite{carmona_mean_2016}, \cite{coghi_propagation_2016} and \cite{dermoune_propagation_2003}. But in these papers the common noise, represented by a common (maybe infinite dimensional) Brownian motion, is already present at the level of the finite particle system, the mean field interactions act on the drift of each particle, and the scaling is the classical one in~$N^{-1}.$ On the contrary to this, in our model, this common Brownian motion, leading to conditional propagation of chaos, is only present in the limit, and it is created by the central limit theorem as a consequence of the joint action of the small jumps of the finite size particle system. Moreover, in our model, the interactions survive as a variance term in the limit system due to the diffusive scaling in $N^{-1/2}.$

Now let us discuss the form of $\mu_s$, which is the limit of the empirical measures of the limit system~$\ll(\limY^i_s\rr)_{i\geq 1}$. The theorem of  Hewitt-Savage, \cite{hewitt_symmetric_1955},  implies that the law of $\ll(\limY_s^i\rr)_{i\geq 1}$ is a mixture directed by the law of $\mu_s$. As it has been remarked by \cite{carmona_mean_2016} and \cite{coghi_propagation_2016}, this conditioning reflects the dependencies between the particles.

We will  show that the  variables $\limY^i$ are conditionally independent given the Brownian motion $W.$ As a consequence, $\mu_s $ will be shown to be the conditional law of the solution given the Brownian motion, that is, $P-$almost surely,
\begin{equation}\label{eq:limitmudef}
  \mu_s ( \cdot ) = P  ( \limY^i_s \in \cdot | (W_t)_{ 0 \le t \le s  }  )  = P( \limY^i_s \in \cdot | W ) ,
\end{equation}  
for any $ i \in \N .$ Equation \eqref{eq:dynlimintro} together with \eqref{eq:limitmudef} gives a precise definition of the limit system. 

The nonlinear SDE \eqref{eq:dynlimintro} is not clearly well-posed, and our first main result, Theorem \ref{prop:42}, gives appropriate conditions on the system that guarantee pathwise uniqueness and the existence of a strong solution to \eqref{eq:dynlimintro}.

We then prove, in Sections \ref{sec:21} and \ref{sec:22}, our main Theorem \ref{convergencemuN} stating the convergence in distribution of the sequence of empirical measures $ \mu^N=N^{-1}\sum_{i=1}^N \delta_{(X^{N,i}_t)_{t\geq 0}}, $ in ${\mathcal P} (D(\r_+,\r)),$ to the random limit $ \mu = P ( (\limY_t)_{t\geq 0} \in \cdot | W) .$ To do so, we first prove that under suitable conditions on the parameters of the system, the sequence $ \mu^N$ is tight (see Proposition \ref{mutight} below). We then follow a classical road and identify every possible limit as solution of a martingale problem. Since the random limit measure $ \mu $ will only be the directing measure of the limit system (that is, the conditional law of each coordinate, but not its law), this martingale problem is not a classical one. It is in particular designed to reflect the correlation between the particles and to describe all possible limits of couples of neurons.
 
Classical representation theorems imply that any coordinate of the limit process must satisfy an equation of the type \eqref{eq:dynlimintro}. The fact that our martingale problem describes correlations within couples of neurons allows to show that each coordinate is driven by its own Poisson random measure and that all coordinates are driven by the same underlying Brownian motion $W.$  But it is not yet clear that  $\mu_s $ is of the form \eqref{eq:limitmudef}. In other words, it has to be proven that the only common randomness is the one present in the driving Brownian motion $W.$ To prove this last point, we introduce an auxiliary particle system which is a mean field particle version of the limit system, constructed with the same underlying Brownian motion, and we provide an explicit control on the distance (with respect to a particular $L^1 -$norm) between the two systems.

Let us finally mention that the random limit measure $ \mu $ satisfies the following nonlinear stochastic PDE in weak form: for any test function
$ \phi \in C^2_b (\R),$ the set of $C^2$-functions on $\R$ such that $\phi,$ $\phi'$  and $\phi''$ are bounded,
for any $t\geq 0$,
\begin{multline*}
\int_\R  \phi (x) \mu_t (dx)  = \int_\R  \phi (x) \mu_0  (dx) +  \int_0^t \left( \int_\R  \phi' (x) \mu_s (dx) \right) \,  \sqrt{\mu_s (f) } d W_s \\
+\int_0^t \int_\R  
\Big([ \phi ( 0) - \phi ( x) ] f(x) - \alpha  \phi'(x) x + \frac12 \phi'' (x) \mu_s (f)  \Big) \mu_s ( dx)ds .
\end{multline*}

{\bf Organisation of the paper.} In Section~\ref{secnota}, we state the assumptions and formulate the main results. Section~\ref{secthm14} is devoted to the proof of the convergence of $\mu^N:=\sum_{j=1}^N\delta_{X^{N,j}}$ (Theorem~\ref{convergencemuN}). In particular, we introduce our new martingale problem in Section \ref{sec:22} and prove the uniqueness of the limit law in Theorem \ref{uniquelimit}. Finally, in Appendix, we prove some auxiliary results.

\section{Notation, Model and main results}
\label{secnota}

\subsection{Notation}
We use the following notation throughout the paper.

If $E$ is a metric space, we note:
\begin{itemize}
\item $\mathcal{P}(E)$ the space of probability measures on $E$ endowed with the topology of the weak convergence,
\item $C_b^n(E)$ the set of the functions $g$ which are $n$ times continuously differentiable such that $g^{(k)}$ is bounded for each $0\leq k\leq n,$
\item $C_c^n(E)$ the set of functions $g\in C_b^n(E)$ that have a compact support.
\end{itemize}

In addition, in what follows $D(\r_+,\r)$ denotes the space of c\`adl\`ag functions from $\r_+$ to $\r$, endowed with Skorohod metric, and $C$ and $K$ denote arbitrary positive constants whose values can change from line to line in an equation. We write $C_\theta$ and $K_\theta$ if the constants depend on some parameter $\theta.$

\subsection{The finite system}

We consider, for each $N\geq 1$, a family of i.i.d. Poisson measures $(\pi^i(ds, dz, du))_{i=1,\dots,N}$ on $\r_+  \times \r_+ \times \r $ having intensity measure $ds dz \mesuremu (du)$ where $\nu$ is a probability measure on $\r$, as well as an i.i.d. family $(X^{N,i}_0)_{i=1,\dots,N}$ of $\r $-valued random variables independent of the Poisson measures. The object of this paper is to study the convergence of the Markov process $X^N_t = (X^{N, 1 }_t, \ldots , X^{N, N}_t )$ taking values in $\r^N$ and solving, for $i=1,\dots,N$, for $t\geq 0$,
\begin{equation}\label{eq:dyn}
\ll\{\begin{array}{rcl}
X^{N, i}_t &= &\displaystyle X^{N,i}_0  - \alpha   \int_0^t  X^{N, i}_s   ds -  \int_{[0,t]\times\r_+\times\r} X^{N, i}_{s-}  \indiq_{ \{ z \le  f ( X^{N, i}_{s-}) \}} \pi^i (ds,dz,du) \\
&&+\displaystyle \frac{1}{\sqrt{N}}\sum_{ j \neq i } \int_{[0,t]\times\r_+\times\r}u \indiq_{ \{ z \le  f ( X^{N, j}_{s-}) \}} \pi^j (ds,dz,du),\\
X_0^{N,i}&\sim& \nu_0.
\end{array}\right.
\end{equation} 
The coefficients of this system are the exponential loss factor $ \alpha > 0, $  the jump rate function $f:\r \mapsto \r_+$  and the probability measures $\mesuremu$ and $\nu_0$.

In order to guarantee existence and uniqueness of a strong solution of~\eqref{eq:dyn}, we introduce the following hypothesis.

\begin{assu}
\label{ass:1}
The function $f$ is Lipschitz continuous. 
\end{assu}

In addition, we also need the following condition to obtain a priori bounds on some moments of the process $\ll(X^{N,i}\rr)_{1\leq i\leq N}.$ 
\begin{assu}
\label{control}
We assume that 
 $\int_\r xd\nu(x)=0,$ $\int_\r x^2d\nu(x)<+\infty,$ and $\int_\r x^2d\nu_0(x)<+\infty.$
\end{assu}

Under Assumptions~\ref{ass:1} and~\ref{control}, existence and uniqueness of strong solutions of~\eqref{eq:dyn} follow from Theorem~IV.9.1 of \cite{ikeda_stochastic_1989}, exactly in the same way as in Proposition~6.6 of \cite{erny_mean_2019}.

We now define precisely the limit system and discuss its properties before proving the convergence of the finite to the limit system. 

\subsection{The limit system}
The limit system $\ll(\limY^i\rr)_{i\geq 1}$ is given by
\begin{equation}\label{eq:dynlimit1}
\ll\{\begin{array}{rcl}
\limY^i_t &=& \displaystyle\limY^i_0 - \alpha   \int_0^t \limY^i_s   ds -\int_{[0,t]\times\r_+\times\r}  \limY^i_{s- } \indiq_{ \{ z \le  f ( \limY^i_{s-}) \}} \bN^i  (ds,dz, du) \\
&&\displaystyle + \sigma \int_0^t \sqrt{\espc{f\ll(\limY^i_s\rr)}{\W  }} d W_s,\\
\limY^i_0&\sim&\nu_0 .
\end{array}\rr. 
\end{equation}
In the above equation,  $(W_t)_{t\geq 0}$ is a standard one-dimensional Brownian motion which is independent of the Poisson random measures, and $ \W = \sigma \{ W_t ,t \geq 0 \} .$ Moreover,  the initial positions $ \limY^i_0  , i \geq 1 , $ are i.i.d., independent of $ W$ and of the Poisson random measures, distributed according to $\nu_0$ which is the same probability measure as in~\eqref{eq:dyn}. The common jumps of the particles in the finite system, due to their scaling in $ 1/\sqrt{N} $ and the fact that they are centred, by the Central Limit Theorem, 
create this single Brownian motion $ W_t $ which is underlying each particle's motion and which induces the common noise factor for all particles in the limit. 

The limit equation \eqref{eq:dynlimit1} is not clearly well-posed and requires more conditions on the rate function~$f$. Let us briefly comment on the type of difficulties that one encounters when proving trajectorial uniqueness of \eqref{eq:dynlimit1}. Roughly speaking, the jump terms demand to work in an $L^1 - $framework, while the diffusive terms demand to work in an $L^2-$framework. \cite{graham_mckean-vlasov_1992-1} proposes a unified approach to deal both with jump and with diffusion terms in a non-linear framework, and we shall rely on his ideas in the sequel. The presence of the random volatility term which involves conditional expectation causes however additional technical difficulties. 
Finally, another difficulty comes from the fact that the jumps induce non-Lipschitz terms of the form $ \limY^i_s   f ( \limY^i_{s}) .$  For this reason a classical Wasserstein-$1-$coupling is not appropriate for the jump terms. Therefore we propose a different distance which is inspired by the one already used in \cite{fournier_toy_2016}. To do so,  we need to work under the following additional assumption.

\begin{assu}\label{ass:2}
1. We suppose that $ \inf f > 0 .$ \\
2. There exists a function $a \in C^2(\R , \R_+ ), $ strictly increasing and bounded, such that,  for a suitable constant $C,$ for all $ x, y \in \R,$ 
$$ |a'' ( x) - a'' (y) | +  |a'(x) - a' (y) | + |x a'(x) - ya'(y) | + |f(x)-f(y)| \le C | a(x)-a(y) |.$$
\end{assu}

Note that Assumption~\ref{ass:2} implies Assumption~\ref{ass:1} as well as the boundedness of the rate function~$f.$ 

\begin{prop}
Suppose that  $ f(x) =  c + d  \arctan (x) ,$ where $c > d \frac{\pi}2 ,$ $ d > 0 .$ Then Assumption \ref{ass:2} holds with $ a =  f.$
\end{prop}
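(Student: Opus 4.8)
The plan is to verify directly each of the four Lipschitz-type bounds in Assumption~\ref{ass:2}.2 with $a = f$, where $f(x) = c + d\arctan(x)$, plus checking the structural requirements on $a$ and the condition $\inf f > 0$. First, the easy structural points: $f \in C^2(\R,\R_+)$ since $\arctan$ is smooth; $f$ is strictly increasing because $f'(x) = d/(1+x^2) > 0$ as $d > 0$; and $f$ is bounded since $\arctan$ takes values in $(-\pi/2,\pi/2)$, so $f$ takes values in $(c - d\pi/2,\, c + d\pi/2)$. The condition $c > d\pi/2$ then gives exactly $\inf f = c - d\pi/2 > 0$, which also shows $f(x) > 0$ so that $f$ indeed maps into $\R_+$, and that $a = f$ is $\R_+$-valued as required. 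So Assumption~\ref{ass:2}.1 holds, and the only real content is the chain of four inequalities, which with $a = f$ reads
\begin{equation*}
|f''(x) - f''(y)| + |f'(x) - f'(y)| + |xf'(x) - yf'(y)| + |f(x) - f(y)| \le C\,|f(x) - f(y)|.
\end{equation*}
Since $f(x) - f(y) = d(\arctan x - \arctan y)$ and $d > 0$, it is equivalent (after dividing by $d$) to bound each of the four terms by a constant multiple of $|\arctan x - \arctan y|$.

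The key idea is that $\arctan$ is a bi-Lipschitz-type change of variables once we also control things near infinity: writing $u = \arctan x \in (-\pi/2,\pi/2)$, we have $x = \tan u$, and each of $f'(x) = d\cos^2 u$, $f''(x) = -2d\cos^3 u \sin u$, and $xf'(x) = d\sin u\cos u$ extends to a $C^1$ — indeed smooth — function of $u$ on the \emph{closed} interval $[-\pi/2,\pi/2]$ (the apparent singularity of $x = \tan u$ is killed by the $\cos^2 u$ and $\cos^3 u$ factors, and $\sin u \cos u$, $\cos^3 u\sin u$ are manifestly bounded with bounded derivative on $[-\pi/2,\pi/2]$). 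A $C^1$ function on a compact interval is Lipschitz, so each of the three maps $u \mapsto f'(\tan u)$, $u \mapsto f''(\tan u)$, $u \mapsto \tan u \cdot f'(\tan u)$ is Lipschitz in $u$ with some explicit constant; translating back, $|f'(x) - f'(y)| \le L_1 |\arctan x - \arctan y|$ and similarly for the other two, while $|f(x) - f(y)| = d|\arctan x - \arctan y|$ trivially. Summing the four bounds and factoring out $d^{-1}$ on the right gives the claim with $C = 1 + (L_1 + L_2 + L_3)/d$ (or any larger constant). I would then remark that, as noted in the excerpt, Assumption~\ref{ass:2} forces Assumption~\ref{ass:1} and boundedness of $f$, both of which are transparent here ($f$ bounded as above, and $f$ Lipschitz since $|f'| \le d$).

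The only step requiring a little care — the "main obstacle", though it is mild — is confirming that $u \mapsto \tan u \cdot f'(\tan u) = d\sin u\cos u$ and $u \mapsto f''(\tan u) = -2d\sin u\cos^3 u$ really do extend smoothly across $u = \pm\pi/2$, i.e.\ that the product of the blow-up of $\tan$ with the decay of the cosine powers is genuinely bounded with bounded derivative; this is immediate from the trigonometric identities $\tan u\cos^2 u = \sin u\cos u$ and $\tan^{-1}$ being irrelevant here, but it is the place where one should be explicit rather than wave hands. An alternative, slightly more computational route avoids the change of variables entirely: compute $f'(x) = d/(1+x^2)$, $f''(x) = -2dx/(1+x^2)^2$, $xf'(x) = dx/(1+x^2)$, observe each is a bounded $C^1$ function of $x \in \R$ with derivative decaying like $|x|^{-3}$ at infinity, apply the mean value theorem to get Lipschitz constants in $x$, and then use that on any bounded region $\arctan$ is bi-Lipschitz while for $|x|$ or $|y|$ large the left-hand terms are small and $|\arctan x - \arctan y|$ need not be — so one splits into the cases "$x,y$ both in a fixed compact set" and "at least one large" and checks the inequality in each. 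Either way the argument is short; I would present the change-of-variables version as it is cleaner.
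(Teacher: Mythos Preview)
Your proof is correct and rests on the same idea as the paper's: both amount to showing that each of $f'$, $f''$, and $x\mapsto x f'(x)$ is Lipschitz as a function of $u=\arctan x$. The paper verifies this only for the term $|x a'(x)-y a'(y)|$ via the explicit bound $\bigl|\tfrac{d}{dx}\tfrac{x}{1+x^2}\bigr|\le \tfrac{1}{1+x^2}$ and integration, declaring the remaining inequalities immediate; your change-of-variables argument (rewriting everything as smooth functions of $u$ on the compact interval $[-\pi/2,\pi/2]$) is the same computation in disguise but packages all four terms uniformly.
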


\begin{proof}
We quickly check that $|x a'(x) - ya'(y) |  \le C | a(x) - a(y) | .$ We have that $ a' (x) =  \frac{d}{1+x^2 } , $ whence $ x a'(x) - ya'(y) = d( \frac{x}{1+ x^2 } - \frac{y}{1+ y^2 }) .$ We use that $ \left| \frac{d}{dx} \left( \frac{x}{1+ x^2 } \right)\right|  =\left| \frac{1- x^2}{(1+x^2)^2} \right| \le \frac{1}{1+x^2 } .$ Suppose w.l.o.g. that $ x \le y .$ As a consequence, 
$$ | x a'(x) - ya'(y)| = d \left| \int_x^y \frac{1- t^2}{(1+t^2)^2}dt \right| \le d \int_x^y \frac{1}{1 + t^2 } dt =  | \arctan (y) - \arctan (x)| = d | a(x) - a(y)|. $$ 
The other points of Assumption \ref{ass:2} follow immediately.
\end{proof}

Under these additional assumptions we obtain the well-posedness of each coordinate of the limit system~\eqref{eq:dynlimit1}, 
that is, of the $({\mathcal F}_t)_t- $ adapted process $ (\limY_t)_t  $ having c\`adl\`ag trajectories which is solution of the SDE
\begin{equation}
\ll\{\begin{array}{rcl}
\label{eq:dynlimit}
d\limY_t&=&- \alpha \displaystyle \limY_t dt- \limY_{t-}\int_{ \r_+\times\r}  \indiq_{ \{ z \le  f ( \limY_{t-}) \}} \bN  (dt,dz, du) +\sigma\sqrt{\mu_t(f)}dW_t,\\
\limY_0&\sim&\nu_0 ,~~\mu_t(f)=\espc{f\ll(\limY_t\rr)}{\W }=\espc{f\ll(\limY_t\rr)}{\W_t } .
\end{array}\rr.
\end{equation}
Here, $ {\mathcal F}_t = \sigma \{  \bN ( [0, s ] \times A ), s \le t , A \in {\mathcal B} ( \R_+ \times \R ) \} \vee  \W_t, $ $\W_t = \sigma \{ W_s , s \le t \} $ and $ \W = \sigma \{ W_s , s \geq 0\}.$ 

\begin{thm}\label{prop:42}
Grant Assumption~\ref{ass:2}.  \\
1. Pathwise uniqueness holds for the nonlinear SDE~\eqref{eq:dynlimit}. \\
2. If additionally, $\int_\r x^2d\nu_0(x)<+\infty,$ then there exists a unique strong solution $(\limY_t)_{t\geq 0}  $ of the nonlinear SDE~\eqref{eq:dynlimit}, which is $({\mathcal F}_t)_t- $ adapted with c\`adl\`ag trajectories, satisfying for every $t>0$,
\begin{equation}
\label{controllimite}
\esp{\underset{0\leq s\leq t}{\sup}\limY_s^2}<+\infty.
\end{equation}
\end{thm}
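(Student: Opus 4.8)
The plan is to prove pathwise uniqueness and strong existence in turn, and throughout to measure the distance between two solutions of \eqref{eq:dynlimit} not by $|\limY^1_t-\limY^2_t|$ but by $|a(\limY^1_t)-a(\limY^2_t)|$, where $a$ is the function provided by Assumption~\ref{ass:2}. This is the device --- borrowed from \cite{fournier_toy_2016} and \cite{graham_mckean-vlasov_1992-1} --- that tames the non-Lipschitz jump drift $\limY_t f(\limY_t)$, since Assumption~\ref{ass:2} bounds each of $|xa'(x)-ya'(y)|$, $|f(x)-f(y)|$, $|a'(x)-a'(y)|$, $|a''(x)-a''(y)|$ by a constant times $|a(x)-a(y)|$; moreover $a$ is bounded, $C^2$ and injective, so $\normi{a'}$ and $\normi{a''}$ are finite. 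Write $\mu^i_s(f):=\espc{f(\limY^i_s)}{\W}$. Since $\inf f>0$, $\mu^i_s(f)\in[\inf f,\normi{f}]$, a set on which $r\mapsto\sigma\sqrt r$ is Lipschitz, and by conditional Jensen $(\mu^1_s(f)-\mu^2_s(f))^2\le\espc{(f(\limY^1_s)-f(\limY^2_s))^2}{\W}\le C\,\espc{(a(\limY^1_s)-a(\limY^2_s))^2}{\W}$, so the dependence of the volatility on the conditional law is, after conditioning on $\W$, of Lipschitz type.

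For pathwise uniqueness, take two solutions $\limY^1,\limY^2$ built on the same $W$ and the same $\pi$ with $\limY^1_0=\limY^2_0$, set $\Delta_t=a(\limY^1_t)-a(\limY^2_t)$, and apply It\^o's formula to $\psi_\eps(\Delta_t)$ for a smooth convex even approximation $\psi_\eps$ of $|\cdot|$ with $\psi_\eps''\ge0$ supported in $[-\eps,\eps]$ and $\psi_\eps''(r)r^2\le C\eps$. Taking expectations kills the $dW$- and $d\pi$-martingale parts. The drift contribution, from $-\alpha(\limY^1_s a'(\limY^1_s)-\limY^2_s a'(\limY^2_s))$ and the It\^o correction $\tfrac{\sigma^2}{2}(\mu^1_s(f)a''(\limY^1_s)-\mu^2_s(f)a''(\limY^2_s))$, is $\le C|\Delta_s|+C|\mu^1_s(f)-\mu^2_s(f)|$, hence, after taking expectation and using $\esp{\espc{\cdot}{\W}}=\esp{\cdot}$, $\le C\,\esp{|\Delta_s|}$. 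For the jump part one exploits the reset: a common jump (rate $f(\limY^1_{s-})\wedge f(\limY^2_{s-})\ge\inf f$) sets $\Delta$ to $0$, a favourable contribution, while an unbalanced jump (rate $|f(\limY^1_{s-})-f(\limY^2_{s-})|\le C|\Delta_{s-}|$) changes $\psi_\eps(\Delta)$ by at most $2\normi{a}$, contributing $\le C\,\esp{|\Delta_s|}$; the compensated jump martingale vanishes in expectation.

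The delicate term is the second-order contribution $\tfrac{\sigma^2}{2}\esp{\int_0^t\psi_\eps''(\Delta_s)\beta_s^2\,ds}$ with $\beta_s=\sqrt{\mu^1_s(f)}\,a'(\limY^1_s)-\sqrt{\mu^2_s(f)}\,a'(\limY^2_s)$. Splitting $\beta_s^2\le2\normi{f}(a'(\limY^1_s)-a'(\limY^2_s))^2+2\normi{a'}^2(\sqrt{\mu^1_s(f)}-\sqrt{\mu^2_s(f)})^2$, the first piece gives $\le C\psi_\eps''(\Delta_s)\Delta_s^2\le C\eps\to0$, but the second does not vanish with $\Delta_s$, because $\mu^i_s(f)$ is an average, not a function of $\limY^i_s$. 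This is exactly the clash between the $L^1$-framework imposed by the jumps and the $L^2$-framework imposed by the diffusion, and it is here that I would follow the unified scheme of \cite{graham_mckean-vlasov_1992-1}: using $\inf f>0$, the conditional Jensen bound above, and a companion $L^2$-estimate for $\esp{\Delta_t^2}$ --- obtained by applying It\^o's formula to $\Delta_t^2$, which needs no mollification and in which the diffusion contributes only $\sigma^2\esp{\beta_s^2}\le C\esp{\Delta_s^2}$ and the jumps $\le C\esp{|\Delta_s|}$ --- one controls this second-order term by a combination of $\esp{|\Delta_s|}$ and $\esp{\Delta_s^2}$, and a Gronwall argument on $\esp{|\Delta_t|}+\esp{\Delta_t^2}$ concludes. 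Since $\Delta_0=0$, it follows that $\esp{|a(\limY^1_t)-a(\limY^2_t)|}=0$ for all $t$, and injectivity of $a$ gives $\limY^1=\limY^2$. I expect this second-order term to be the main obstacle.

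For existence and the moment bound, assume $\int_\R x^2 d\nu_0(x)<\infty$. I would build the solution by a Picard iteration on the conditional variance: given a $(\W_t)_t$-adapted $[\inf f,\normi{f}]$-valued process $m^{(n)}$, let $\limY^{(n)}$ be the strong solution of \eqref{eq:dynlimit} with $\mu_t(f)$ replaced by $m^{(n)}_t$ --- a well-posed jump SDE with bounded Lipschitz coefficients and bounded adapted volatility, by Theorem~IV.9.1 of \cite{ikeda_stochastic_1989} as for the finite system --- and put $m^{(n+1)}_t=\espc{f(\limY^{(n)}_t)}{\W_t}$. It\^o's formula applied to $(\limY^{(n)}_t)^2$ yields $\esp{\sup_{s\le t}(\limY^{(n)}_s)^2}<\infty$, uniformly in $n$: the reset term only decreases the square, the drift gives $-2\alpha(\limY^{(n)}_s)^2\le0$, the diffusion contributes $\sigma^2 m^{(n)}_s\le\sigma^2\normi{f}$, the martingale is handled by Burkholder--Davis--Gundy, and Gronwall closes the bound using $\int x^2 d\nu_0<\infty$. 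The estimates of the uniqueness proof show $(m^{(n)})_n$ is Cauchy for a suitable metric; the limit is a solution $(\limY_t)_t$ of \eqref{eq:dynlimit} satisfying \eqref{controllimite}, and pathwise uniqueness identifies it as the unique strong solution.
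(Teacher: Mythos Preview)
Your overall architecture---compare solutions through $a$, use Assumption~\ref{ass:2} to linearize everything in $|a(x)-a(y)|$, and exploit $\inf f>0$ to make $\sqrt{\cdot}$ Lipschitz on the range of $\mu_s(f)$---matches the paper. You also correctly locate the obstruction: in the It\^o expansion of $\psi_\eps(\Delta_t)$, the second--order piece $\psi_\eps''(\Delta_s)\bigl(\sqrt{\mu^1_s(f)}-\sqrt{\mu^2_s(f)}\bigr)^2$ does not vanish with $\Delta_s$ because the conditional law is a nonlocal functional.

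The gap is your proposed cure. A joint Gronwall on $v(t)=\E|\Delta_t|$ and $w(t)=\E\Delta_t^2$ does not close. From the mollified $L^1$ estimate you only get
\[
v(t)\le C\!\int_0^t v(s)\,ds + C\eps + \frac{C}{\eps}\!\int_0^t w(s)\,ds,
\]
since $\|\psi_\eps''\|_\infty\sim 1/\eps$ and $\bigl(\sqrt{\mu^1_s(f)}-\sqrt{\mu^2_s(f)}\bigr)^2\le C\,\E(\Delta_s^2\,|\,\W)$. The $L^2$ estimate gives $w(t)\le C\int_0^t(v+w)$, hence $w^*(t)\le Ct\,v^*(t)$ for small $t$; substituting and optimizing over $\eps$ only yields $v^*(t)\le Ct\sqrt{v^*(t)}$, which does \emph{not} force $v^*(t)=0$. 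So the system $(v,w)$ cannot be closed by a linear Gronwall, and the Osgood condition fails for the resulting differential inequality.

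The paper avoids mollification altogether and works directly with $Z_t^*:=\sup_{s\le t}|a(\limYu_s)-a(\limYd_s)|$. Writing $Z_t=A_t+M_t+(\text{jumps})$ and applying Burkholder--Davis--Gundy to the continuous martingale $M$, the key observation is that for every $s\le t$,
\[
\bigl|\sqrt{\limmuu_s(f)}-\sqrt{\limmud_s(f)}\bigr|\le C\,\E\bigl(|Z_s^*|\,\big|\,\W\bigr)\le C\,\E\bigl(|Z_t^*|\,\big|\,\W\bigr),
\]
which is monotone in $t$ and hence constant over the time integral in $\langle M\rangle_t$. This produces the clean \emph{linear} bound $\E\sup_{s\le t}|M_s|\le C\sqrt{t}\,\E|Z_t^*|$, so that $\E|Z_t^*|\le C(t+\sqrt t)\,\E|Z_t^*|$, and one concludes on a short interval, then iterates. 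This is in fact the ``unified scheme'' you attribute to \cite{graham_mckean-vlasov_1992-1}: it lives in $L^1$ of the running supremum, not in a Yamada--Watanabe mollification. Your existence sketch (Picard iteration with frozen volatility, moment bound via It\^o on the square plus BDG) is sound and close to the paper's, but note that its Cauchy step relies on the same estimate as uniqueness, so it inherits the gap above.
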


\begin{rem}
Notice that the stochastic integral $ \int_0^t \sqrt{\mu_s(f)}dW_s $ is well-defined since $ s \mapsto \sqrt{\mu_s(f)} $ is an $({\mathcal W}_t)_t- $progressively measurable process.
\end{rem}

In what follows we just give the proof of Item 1. of the above theorem since its arguments are important for the sequel. We postpone the rather classical proof of Item 2.\  to Appendix.

\begin{proof}[Proof of Item 1. of Theorem \ref{prop:42}]
Consider two solutions $ (\limYu_t)_{t \geq 0}$ and $ (\limYd_t)_{t \geq 0 } , $  $({\mathcal F}_t)_t- $adapted, defined on the same probability space and driven by the same Poisson random measure $\bN $ and the same Brownian motion~$ W,$ and with $ \limYu_0 = \limYd_0.$  We consider $ Z_t := a(\limYu_t) -   a( \limYd_t) .$ Denote $\limmuu_s(f)=\E[f( \limYu_s)|\W_s]$ and $\limmud_s(f)=\E[f( \limYd_s)|\W_s]. $  \\
Using Ito's formula, we can write
\begin{multline*}
Z_t = - \alpha \int_0^t \left(  \limYu_s   a' ( \limYu_s ) - \limYd_s a' ( \limYd_s) \right) ds +\frac12 \int_0^t ( a'' ( \limYu_s) \limmuu_s(f)  - a'' ( \limYd_s )  \limmud_s(f) ) \sigma^2 ds \\
+  \int_0^t  ( a'  ( \limYu_s) \sqrt{\limmuu_s(f)} -a'  (\limYd_s )  \sqrt{ \limmud_s(f)} ) \sigma d W_s  \\
- \int_{[0,t]\times\r_+\times\r} \, [ a (\limYu_{s- }) - a(  \limYd_{s-})  ]    \indiq_{ \{ z \le  f ( \limYu_{s-})  \wedge f ( \limYd_{s-}) \}} \bN   (ds, dz, du)\\
+  \int_{[0,t]\times\r_+\times\r}  \, [a(0)-  a( \limYu_{s-} )]     \indiq_{ \{ f ( \limYd_{s-} ) < z \le f ( \limYu_{s-} )  \}} \bN (ds, dz, du) \\
+  \int_{[0,t]\times\r_+\times\r} \, [  a( \limYd_{s-} ) - a(0) ]  \indiq_{ \{ f (  \limYu_{s-} ) < z \le f ( \limYd_{s-} )  \}} \bN (ds,dz,du)  = : A_t + M_t +\Delta_t ,
\end{multline*}
where $ A_t $ denotes the bounded variation part of the evolution, $M_t$ the martingale part and $ \Delta_t$ the sum of the three jump terms. Notice that 
$$M_t=  \int_0^t  ( a'  ( \limYu_s) \sqrt{\limmuu_s(f)} -a'(\limYd_s )  \sqrt{\limmud_s(f)} ) \sigma d W_s$$
is a square integrable martingale since $ f$ and $a'$ are bounded. 

We wish to obtain a control on $ |Z^* _t | := \sup_{ s\le t } |Z_s | .$ We first take care of the jumps of $ |Z_t|.$ Notice first that, since $f$ and $a$ are bounded, 
\begin{multline*}
\Delta  (x,y):=  (f(x) \wedge f(y)) | a (x) - a(y ) |  + | f (x  ) - f( y )  |  \; \Big|  |a (0) - a(y)| + |a(0) - a(x)| \Big| \\
\le C | a (x) - a( y ) | ,
\end{multline*}
implying that 
$$ \E \sup_{s \le t } | \Delta_s |   \le  C  \E \int_0^t  | a(\limYu_s) - a(\limYd_s ) | ds \le C t \, \E |Z_t^* |  . $$ 
Moreover, for a constant $C$ depending on $\sigma^2 ,$ $ \| f \|_\infty , \|a\|_\infty,  \| a'\|_\infty, \| a'' \|_\infty   $ and $ \alpha  , $ 
\begin{multline*} 
\E \sup_{ s \le t }  | A_s |   \le C  \int_0^t \E  |a'( \limYu_s ) \limYu_s - a' ( \limYd_s )\limYd_s | ds  \\
+ C \left[ \int_0^t | a'' ( \limYu_s )  -a '' ( \limYd_s ) |   ds +  \int_0^t | \limmuu_s ( f) - \limmud_s ( f) | ds \right] .
\end{multline*}
We know that  $ |a'( \limYu_s ) \limYu_s - a' ( \limYd_s )\limYd_s |   +  |a'' ( \limYu_s )  - a'' ( \limYd_s ) | \le C  |a ( \limYu_s  )  - a ( \limYd_s ) |= C  | Z_s| .$ Therefore, 
$$ \E \sup_{ s \le t }  | A_s |  \le C \E \left[ \int_0^t | Z_s | ds + \int_0^t | \limmuu_s ( f) - \limmud_s ( f) | ds \right].$$
Moreover, 
$$  |\limmuu_s (f)- \limmud_s (f) |   = \Big| \E \left( f ( \limYu_s ) - f ( \limYd_s ) | \W \right) \Big|  \le  \E \left(  | f ( \limYu_s ) - f ( \limYd_s )| | \W  \right) \leq \E ( |Z_s|  | \W) ,$$
and thus, 
$$ \E \int_0^t  | \limmuu_s ( f) - \limmud_s ( f) | ds \le  \E \int_0^t |Z_s|  ds \le t \E | Z^*_t| .$$ 
Putting all these upper bounds together we conclude that for a constant $C$ not depending on $t,$ 
$$ \E \sup_{s \le t} |A_s| \le C t \E  |Z_t^*|  .$$   
Finally, we treat the martingale part using the Burkholder-Davis-Gundy inequality, and we obtain 
$$
 \E \sup_{s \le t} |M_s| \le C \E \left[   \left( \int_0^t (a' (\limYu_s ) \sqrt{ \limmuu_s ( f) } - a' (\limYd_s )  \sqrt{ \limmud_s ( f) })^2 ds  \right)^{1/2}\right].$$
But
\begin{multline}\label{eq:varquadratique}  
 (a' (\limYu_s ) \sqrt{ \limmuu_s ( f) } - a' (\limYd_s )  \sqrt{ \limmud_s ( f) })^2  \le 
C \left[ ( (a' (\limYu_s ) - a' (\limYd_s ))^2 +  (\sqrt{ \limmuu_s ( f) } -   \sqrt{ \limmud_s ( f) })^2 \right] \\
\le C | Z_t^*|^2 + C (\sqrt{ \limmuu_s ( f) } -   \sqrt{ \limmud_s ( f) })^2  ,
\end{multline} 
where we have used that $ | a' (x) - a' (y) | \le C | a(x) - a(y) | $ and that $f$ and $a'$ are bounded.  

Finally, since $\inf  f> 0, $
$$| \sqrt{ \limmuu_s ( f) } -   \sqrt{ \limmud_s ( f) }|^2 \le C | \limmuu_s ( f)  -   \limmud_s ( f) |^2  \le  C \left( \E ( |Z_s^*|  | \W_s ) \right)^2.$$
We use that  
$ |Z_s^* |  \le | Z_t^*| ,$ implying that $ \E ( |Z_s^*|  | \W ) \le \E ( |Z_t^*|  | \W ).$ Therefore we obtain the upper bound 
$$ | \sqrt{ \limmuu_s ( f) } -   \sqrt{ \limmud_s ( f) }|^2
 \le C  \left( \E ( |Z_t^*|  | \W ) \right)^2 $$
for all $ s \le t ,$  which implies the control of 
$$  \E \sup_{s \le t} |M_s| \le C \sqrt{t} \E | Z_t^* | .$$ 
The above upper bounds imply that, for a constant $C$ not depending on $t $ nor on the initial condition,
$$ \E |Z_t^*| \le C (t + \sqrt{t} )   \E | Z_t^* |  ,$$
and therefore, for $ t_1 $ sufficiently small, $ \E |Z_{t_1}^*|  = 0 . $ We can repeat this argument on intervals $ [ t_1, 2 t_1 ], $ with initial condition $\hat X_{t_1 } ,$ and iterate it up to any finite $T$ because $t_1 $ does only depend on the coefficients of the system but not on the initial condition. This implies the assertion. 
\end{proof}

\begin{rem}
Theorem~\ref{prop:42} states the well-posedness of the SDE~\eqref{eq:dynlimit}. Under the same hypotheses, with almost the same reasoning, one can prove the well-posedness of the system~\eqref{eq:dynlimit1}.
\end{rem}

In the sequel, we shall also use an important property of the limit system~\eqref{eq:dynlimit1}, which is the conditional independence of the processes $\limY^i$ ($i\geq 1$) given the Brownian motion~$W$.

\begin{prop}
\label{independence}
If Assumption~\ref{ass:2} holds and $\int_\r x^2d\nu_0(x)<+\infty,$ then 
\begin{itemize}
\item[(i)] for all $N\in\N^*$ there exists a strong solution $\ll(\limY^i\rr)_{1\leq i\leq N}$ of~\eqref{eq:dynlimit1}, and pathwise uniqueness holds,
\item[(ii)] $\limY^1,\ldots, \limY^N$ are independent conditionally to $W,$
\item[(iii)] for all $t\geq 0$, almost surely, the weak limit of $\frac1N \sum_{i=1}^N \delta_{\limY^i_{|[0,t]} }  $ is given by $ \lim_{N\to \infty}\frac1N \sum_{i=1}^N \delta_{\limY^i_{|[0,t]} }= P ( \limY^i_{|[0,t]} \in \cdot | \W_{t} )  = P( \limY^i_{|[0,t]} \in \cdot | \W ) .$ 
\end{itemize}
\end{prop}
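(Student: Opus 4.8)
The plan is to reduce the whole statement to the one-dimensional result, Theorem~\ref{prop:42}. The key observation is that, although the coordinates of~\eqref{eq:dynlimit1} share the Brownian motion $W$, the system is \emph{decoupled}: the $i$-th line involves only $\limY^i$, the Poisson measure $\bN^i$, the initial value $\limY^i_0$ and $W$, because the volatility coefficient $\sigma\sqrt{\espc{f(\limY^i_s)}{\W}}$ depends on $\limY^i$ alone. Hence $(\limY^i)_{1\le i\le N}$ solves~\eqref{eq:dynlimit1} if and only if, for every $i$, the coordinate $\limY^i$ is a solution of the scalar nonlinear SDE~\eqref{eq:dynlimit} driven by $(\bN^i,W)$ with its own self-consistent coefficient $\mu_s(f)=\espc{f(\limY^i_s)}{\W}$. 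Granting Assumption~\ref{ass:2} and $\int_\r x^2\,d\nu_0(x)<\infty$, Theorem~\ref{prop:42} provides, for each coordinate equation, existence of a strong solution and pathwise uniqueness; assembling the $N$ scalar strong solutions yields \emph{(i)}, and pathwise uniqueness for the system follows by applying Item~1 of Theorem~\ref{prop:42} coordinate by coordinate.

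For \emph{(ii)} I would invoke the Yamada--Watanabe principle: pathwise uniqueness together with weak existence for~\eqref{eq:dynlimit} yields a measurable map $F$ with $\limY^i=F(\limY^i_0,\bN^i,W)$ $P$-a.s. Here it is used that the self-consistent coefficient $s\mapsto\mu_s(f)$ is $\W$-measurable, so that one may first solve the scalar fixed point determining $\mu_\cdot(f)$ as a functional of $W$ and then treat the remaining \emph{linear} SDE, whose strong solution is a genuine functional of $(\limY^i_0,\bN^i,W)$. Since, by construction, the pairs $(\limY^1_0,\bN^1),\dots,(\limY^N_0,\bN^N)$ are i.i.d.\ and $\sigma\big((\limY^i_0,\bN^i)_{i\le N}\big)$ is independent of $\W$, conditional independence of $\limY^1,\dots,\limY^N$ given $\W$ follows from the elementary fact that if $\xi_1,\dots,\xi_N$ are mutually independent with $\sigma(\xi_1,\dots,\xi_N)$ independent of a $\sigma$-field $\mathcal G$, and $G$ is $\mathcal G$-measurable, then $g_1(\xi_1,G),\dots,g_N(\xi_N,G)$ are conditionally independent given $\mathcal G$ --- obtained by a Fubini computation of $\E\big[\prod_i h_i(g_i(\xi_i,G))\mid\mathcal G\big]$. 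The same representation also shows that the conditional law $P(\limY^i\in\cdot\mid\W)$ does not depend on $i$.

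For \emph{(iii)}, since \emph{(ii)} holds for every $N$, the whole sequence $(\limY^i)_{i\ge1}$ is, conditionally on $\W$, i.i.d.\ with common random law $m^\W=P(\limY^1\in\cdot\mid\W)$ on the Polish space $D(\r_+,\r)$; composing with the (measurable) restriction map gives conditionally i.i.d.\ samples in $D([0,t],\r)$ with common law $P(\limY^1_{|[0,t]}\in\cdot\mid\W)$. Varadarajan's theorem on almost sure weak convergence of empirical measures of i.i.d.\ samples, applied under a regular version of $P(\cdot\mid\W)$, gives that $\frac1N\sum_{i=1}^N\delta_{\limY^i_{|[0,t]}}$ converges weakly to $P(\limY^1_{|[0,t]}\in\cdot\mid\W)$, $P(\cdot\mid\W)$-a.s.; if $B$ denotes the measurable event that this convergence takes place, then $P(B\mid\W)=1$ a.s., hence $P(B)=1$. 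It remains to identify $P(\limY^1_{|[0,t]}\in\cdot\mid\W)$ with $P(\limY^1_{|[0,t]}\in\cdot\mid\W_t)$: as $\mu_s(f)$ is $\W_s$-measurable for $s\le t$, the path $\limY^1_{|[0,t]}$ is $\sigma(\limY^1_0,\bN^1_{|[0,t]})\vee\W_t$-measurable, and the increments $(W_u-W_t)_{u\ge t}$ are independent of that $\sigma$-field, so $\limY^1_{|[0,t]}$ is conditionally independent of $\W$ given $\W_t$.

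The step I expect to require the most care is the measurable-functional representation $\limY^i=F(\limY^i_0,\bN^i,W)$ underlying \emph{(ii)}: one must check that pathwise uniqueness for the \emph{nonlinear} SDE~\eqref{eq:dynlimit}, whose coefficient is defined through the $\W$-conditional law of the unknown, suffices to run the Yamada--Watanabe argument, so that the strong solution is $\sigma(\limY^i_0,\bN^i,W)$-measurable up to null sets and not merely adapted to some abstract filtration carrying the noise. Once this is secured, \emph{(i)} is a coordinatewise restatement of Theorem~\ref{prop:42} and \emph{(ii)}--\emph{(iii)} reduce to standard properties of conditionally i.i.d.\ sequences.
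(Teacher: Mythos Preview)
Your approach is correct and follows the same architecture as the paper: reduce \emph{(i)} to Theorem~\ref{prop:42} coordinatewise, obtain a measurable representation $\limY^i=\Phi(\limY^i_0,\pi^i,W)$ for \emph{(ii)}, and deduce \emph{(iii)} from a conditional law of large numbers together with the non-anticipativity $\limY^i_{|[0,t]}=\Phi_t(\limY^i_0,\pi^i_{|[0,t]},(W_s)_{s\le t})$.

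The one substantive difference concerns exactly the point you flag as delicate: how to secure the functional representation for a \emph{nonlinear} SDE. You invoke Yamada--Watanabe abstractly and then sketch the right fix (freeze $s\mapsto\mu_s(f)$ as a $\W$-measurable process, reducing to a classical SDE). The paper avoids this issue entirely by pointing out that the Picard iteration in the proof of Item~2 of Theorem~\ref{prop:42} is already constructive: each iterate $\limY^{[n]}$ is built explicitly from $(\limY_0,\pi,W)$, and the representation results of Ikeda--Watanabe (Theorems~IV.1.1 and~IV.9.1) then give the measurable map $\Phi$ and its non-anticipative restriction $\Phi_t$ directly, without any appeal to an abstract Yamada--Watanabe theorem for McKean--Vlasov equations. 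This is slightly more economical, since it reuses the existence proof rather than adding a separate measurability argument. Your route via Varadarajan for \emph{(iii)} is in fact a bit cleaner than the paper's, which applies the ordinary law of large numbers to a single test function and leaves the passage to full weak convergence implicit.
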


Let us finally mention that the random limit measure $ \mu $ satisfies a nonlinear stochastic PDE in weak form. More precisely, 
\begin{cor}\label{cor:PDE}
Grant Assumption~\ref{ass:2} and suppose that $\int_\r x^2d\nu_0(x)<+\infty . $ Then the measure $ \mu = P ( (\limY_t)_{t\geq 0} \in \cdot | W) $ satisfies the following nonlinear stochastic PDE in weak form: for any 
$ \phi \in C^2_b (\R),$ for any $t\geq 0$,
\begin{multline*}
\int_\R  \phi (x) \mu_t (dx)  = \int_\R  \phi (x) \nu_0  (dx) +  \int_0^t \left( \int_\R  \phi' (x) \mu_s (dx) \right) \,  \sqrt{\mu_s (f) } \sigma d W_s \\
+\int_0^t \int_\R  
\Big([ \phi ( 0) - \phi ( x) ] f(x) - \alpha  \phi'(x) x + \frac12 \sigma^2 \phi'' (x) \mu_s (f)  \Big) \mu_s ( dx)ds .
\end{multline*}
\end{cor}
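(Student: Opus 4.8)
The plan is to apply It\^o's formula to $\phi(\limY_t)$, where $(\limY_t)_{t\ge 0}$ is the solution of the one-dimensional limit SDE~\eqref{eq:dynlimit}, and then to take conditional expectation given $\W$, using that the time-$t$ marginal $\mu_t$ of $\mu$ is $\mu_t(\cdot)=P(\limY_t\in\cdot\,|\,\W)$ (recall \eqref{eq:limitmudef}, and that $\mu_t(f)=\espc{f(\limY_t)}{\W}$ by \eqref{eq:dynlimit}). Writing $\widetilde{\bN}(ds,dz,du):=\bN(ds,dz,du)-ds\,dz\,\nu(du)$ for the compensated Poisson measure, and using that at each jump $\limY$ is reset from $\limY_{s-}$ to $0$ (so that $\phi$ increments by $\phi(0)-\phi(\limY_{s-})$) while the Brownian term produces the It\^o correction $\tfrac12\sigma^2\phi''(\limY_s)\mu_s(f)$, It\^o's formula gives for $\phi\in C^2_b(\R)$
\begin{align*}
\phi(\limY_t)={}&\phi(\limY_0)+\int_0^t\Big(-\alpha\phi'(\limY_s)\limY_s+\tfrac12\sigma^2\phi''(\limY_s)\mu_s(f)+[\phi(0)-\phi(\limY_s)]f(\limY_s)\Big)ds\\
&+\sigma\int_0^t\phi'(\limY_s)\sqrt{\mu_s(f)}\,dW_s+\int_{[0,t]\times\r_+\times\r}[\phi(0)-\phi(\limY_{s-})]\indiq_{\{z\le f(\limY_{s-})\}}\widetilde{\bN}(ds,dz,du).
\end{align*}
Under Assumption~\ref{ass:2}, $f$ is bounded, $\phi,\phi',\phi''$ are bounded, and \eqref{controllimite} bounds $\esp{\sup_{s\le t}|\limY_s|}$; hence all terms are integrable and the operations below are licit.

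Next I would take $\espc{\cdot}{\W}$ of each term. By definition of $\mu_t$ we get $\espc{\phi(\limY_t)}{\W}=\int_\R\phi\,d\mu_t$, and since $\limY_0$ is independent of $W$, $\espc{\phi(\limY_0)}{\W}=\int_\R\phi\,d\nu_0$. A conditional Fubini argument turns $\espc{\int_0^t(\cdots)ds}{\W}$ into $\int_0^t\espc{(\cdots)}{\W}ds$; since $\mu_s(f)$ is $\W$-measurable this gives $\espc{\phi''(\limY_s)\mu_s(f)}{\W}=\mu_s(f)\int_\R\phi''\,d\mu_s$, and similarly $\espc{\phi'(\limY_s)\limY_s}{\W}=\int_\R x\phi'(x)\mu_s(dx)$ and $\espc{[\phi(0)-\phi(\limY_s)]f(\limY_s)}{\W}=\int_\R[\phi(0)-\phi(x)]f(x)\mu_s(dx)$, which reproduces the $ds$-integral of the statement.

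There remain the two stochastic integrals, for which the independence of $\bN$ and $W$ is essential. For the compensated jump integral, an $({\mathcal F}_t)_t$-martingale null at $0$, I would use a predictable Riemann-sum approximation: each increment is $\sigma(\bN([0,s]\times\cdot))\vee\W_s$-measurable, hence independent of the future increments $W_u-W_s$ ($u\ge s$), so that $\espc{\cdot}{\W}=\espc{\cdot}{\W_s}$, which vanishes by the martingale property. For the Brownian integral $\sigma\int_0^t\phi'(\limY_s)\sqrt{\mu_s(f)}\,dW_s$ the same approximation yields increments $\phi'(\limY_{t_i})\sqrt{\mu_{t_i}(f)}\,(W_{t_{i+1}}-W_{t_i})$, where $W_{t_{i+1}}-W_{t_i}$ is $\W$-measurable, $\mu_{t_i}(f)$ is $\W_{t_i}$-measurable, and conditionally on $\W_{t_i}$ the variable $\limY_{t_i}$ is independent of $(W_u-W_{t_i})_{u\ge t_i}$ (because $\bN$ is independent of $W$ and $\limY$ is adapted); therefore $\espc{\phi'(\limY_{t_i})\sqrt{\mu_{t_i}(f)}\,(W_{t_{i+1}}-W_{t_i})}{\W}=\sqrt{\mu_{t_i}(f)}\,(W_{t_{i+1}}-W_{t_i})\int_\R\phi'\,d\mu_{t_i}$, and passing to the limit gives $\sigma\int_0^t\big(\int_\R\phi'\,d\mu_s\big)\sqrt{\mu_s(f)}\,dW_s$. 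Collecting all pieces yields the announced weak-form SPDE.

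I expect the main obstacle to be exactly this last interchange of $\espc{\cdot}{\W}$ with the $dW$-integral: because $W$ itself generates the conditioning $\sigma$-field one cannot invoke a conditional martingale property directly, and the Riemann-sum route must be combined carefully with the conditional independence of $\limY_s$ and the future of $W$ given $\W_s$, which in turn rests on the independence of the driving Poisson measure $\bN$ from $W$ together with adaptedness of the solution; a fully rigorous treatment also requires controlling the $L^1$-convergence of the Riemann sums, for which the boundedness of $f,\phi',\phi''$ and the moment bound \eqref{controllimite} are used.
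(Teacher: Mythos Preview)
Your proof is correct and follows the same overall strategy as the paper: apply It\^o's formula to $\phi(\limY_t)$, take conditional expectation given $\W$, treat the drift by a conditional Fubini argument, and handle the Brownian stochastic integral via an Euler/Riemann-sum approximation that converges in $L^2$---which is exactly what the paper does (with the minor cosmetic difference that the paper freezes only $\phi'(\limY_s)$ and keeps $\Delta_k^n=\int_{t_k^n}^{t_{k+1}^n}\sqrt{\mu_s(f)}\,dW_s$ as a $\W$-measurable factor, whereas you also freeze $\sqrt{\mu_s(f)}$).

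The one genuine difference is the jump term. You compensate: the compensator $\int_0^t[\phi(0)-\phi(\limY_s)]f(\limY_s)\,ds$ goes into the drift (handled by Fubini), and you then argue that $\espc{\int_0^t[\phi(0)-\phi(\limY_{s-})]\indiq_{\{z\le f(\limY_{s-})\}}\widetilde\bN(ds,dz,du)}{\W}=0$. The paper instead keeps the \emph{uncompensated} jump integral and rewrites it, via thinning, as a sum over a Poisson process $N_t$ of rate $\|f\|_\infty$ with i.i.d.\ uniform marks $(V_n)$, both independent of $W$; integrating first over $(V_n)$ and then using the order-statistics description of the jump times yields directly $\int_0^t\espc{f(\limY_s)(\phi(0)-\phi(\limY_s))}{\W}\,ds$. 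Your route is more economical. One small caveat: your phrase ``vanishes by the martingale property'' is slightly imprecise, because the $(\mathcal F_t)$-martingale property gives $\espc{\,\cdot\,}{\mathcal F_{t_i}}=0$, while you are conditioning on $\W_s$ with $s$ the \emph{right} endpoint. The clean argument (which makes the Riemann-sum detour for the jump term unnecessary) is the one you in fact have in mind: since $\bN\perp W$, conditionally on $W$ the process $\limY$ is adapted to the filtration of $(\limY_0,\bN)$, the integrand remains predictable there, and $\widetilde\bN$ is still a compensated Poisson measure; hence the compensated integral has mean zero given $\W$.
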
 
The proofs of Proposition~\ref{independence} and of Corollary \ref{cor:PDE} are postponed to Appendix.

\subsection{Convergence to the limit system}

We are now able to state our main result.
\begin{thm}
\label{convergencemuN}
Grant Assumptions~\ref{ass:1},~\ref{control} and~\ref{ass:2}. Then the empirical measure $\mu^N=\frac1N\sum_{i=1}^N\delta_{X^{N,i}}$ of the $N-$particle system $(X^{N,i})_{1\leq i\leq N}$ converges in distribution in $\mathcal{P}(D(\r_+,\r))$ to $\mu:=\mathcal{L}(\bar X^1|\W)$, where $(\bar X^i)_{i\geq 1}$ is solution of~\eqref{eq:dynlimit1}.
\end{thm}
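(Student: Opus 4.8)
The plan is to follow the classical three-step scheme for propagation of chaos via empirical measures: tightness, identification of limit points through a martingale problem, and uniqueness of the limit. First I would establish tightness of the sequence $(\mu^N)_N$ in $\mathcal{P}(D(\r_+,\r))$. By a standard criterion (e.g. Sznitman), tightness of $\mu^N$ as a $\mathcal{P}(D(\r_+,\r))$-valued random variable reduces to tightness of the laws of the individual coordinates $X^{N,1}$ in $D(\r_+,\r)$ together with a uniform integrability/moment bound; this is where Assumptions \ref{ass:1} and \ref{control} enter, giving the a priori bound $\sup_N \E[\sup_{s\le t}(X^{N,1}_s)^2]<\infty$ exactly as in Proposition 6.6 of \cite{erny_mean_2019}. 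Aldous's criterion, applied to the decomposition of \eqref{eq:dyn} into a finite-variation drift term, a compensated jump martingale, and the small-jump martingale term $M^N_t$ whose bracket is $\sigma^2\int_0^t \mu^N_s(f)\,ds$, yields the required control of increments. This step should be essentially routine given the moment bounds.

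Next I would extract a convergent subsequence $\mu^N \Rightarrow \mu^\infty$ (a random element of $\mathcal{P}(D(\r_+,\r))$) and identify every such limit. Here the correct object is the new martingale problem announced in the introduction, designed to track pairs of coordinates so as to capture both the driving Brownian motion common to all particles and the conditional independence. Concretely, for test functions $\phi_1,\phi_2 \in C^2_b(\R)$ one writes, using the generator $A^N$, that
$$
\phi_1(X^{N,i}_t)\phi_2(X^{N,j}_t) - \phi_1(X^{N,i}_0)\phi_2(X^{N,j}_0) - \int_0^t \mathcal{G}^N(\phi_1,\phi_2)(X^{N}_s)\,ds
$$
is a martingale for $i\neq j$, and one passes to the limit in the corresponding functional of $\mu^N$. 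The diffusive cross-term $\sigma^2 \phi_1'(x)\phi_2'(y)\mu^N_s(f)$ — coming from the $N^{-1/2}$-scaled common jumps via the CLT — survives and produces the common Brownian factor; the reset terms $[\phi_k(0)-\phi_k(X^{N,k}_{s-})]f(X^{N,k}_{s-})$ survive as genuine jump terms. Continuity and boundedness of $f$ (guaranteed by Assumption \ref{ass:2}) make the map $\mu \mapsto \mu_s(f)$ suitably continuous for this passage to the limit. Standard representation theorems then show that under $\mu^\infty$ each coordinate satisfies an equation of the form \eqref{eq:dynlimit1} driven by its own Poisson measure and by a single common Brownian motion $W$, with the measure appearing in the volatility being the directing measure $\mu^\infty_s$.

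Finally, uniqueness of the limit — the claim that $\mu^\infty = \mathcal{L}(\bar X^1 \mid \W)$ with $(\bar X^i)$ solving \eqref{eq:dynlimit1} — is where the real work lies, and it is stated separately as Theorem \ref{uniquelimit}. The subtle point is that the martingale problem does not a priori force $\mu^\infty_s$ to be the conditional law given $W$ alone: there could be extra common randomness. To rule this out I would, as the introduction indicates, introduce an auxiliary mean-field particle system $\auxY^{1},\dots,\auxY^{N}$ built on the same Poisson measures and the same Brownian motion $W$, each coordinate solving \eqref{eq:dynlimit1} with $\mu_s$ replaced by the empirical measure of the auxiliary system, and then control the distance between the true limit-system coordinates and the auxiliary ones. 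The non-Lipschitz jump terms $x f(x)$ force the use of the $a(\cdot)$-based distance from Assumption \ref{ass:2} and the argument from the proof of Item 1 of Theorem \ref{prop:42}, combined with a de Finetti / Hewitt–Savage argument (exchangeability of the limit system) to identify the directing measure with the conditional law given $\W$. The hardest part is precisely this last identification: showing that the only surviving common noise is $W$, i.e. that $\mu^\infty_s(\cdot)=P(\bar X^1_s\in\cdot\mid \W)$ almost surely, which requires the quantitative coupling estimate together with pathwise uniqueness from Theorem \ref{prop:42}. Once uniqueness of the limit law is established, all convergent subsequences share the same limit, so the whole sequence $\mu^N$ converges, completing the proof.
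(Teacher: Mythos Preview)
Your plan matches the paper's approach closely: tightness via Aldous and Sznitman (Proposition~\ref{mutight}), identification of limit points through a two-particle martingale problem (Theorem~\ref{convergencemartingale} and Lemma~\ref{representation}), and the auxiliary mean-field system to prove that the directing measure equals $\mathcal{L}(\bar X^1\mid\W)$ (Theorem~\ref{uniquelimit}). The structure and the key ideas are all correct.

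There is, however, one technical step you gloss over incorrectly. You write that ``continuity and boundedness of $f$ make the map $\mu\mapsto\mu_s(f)$ suitably continuous for this passage to the limit.'' This is not automatic: the projection $\gamma\mapsto\gamma_s$ on $D(\R_+,\R)$ is discontinuous at any trajectory with a jump at time $s$, so $\mu\mapsto\mu_s(f)$ (and more generally the functional $F(\mu)$ encoding the martingale-problem test) is only $\mu^\infty$-a.s.\ continuous if you first establish that, $\mu^\infty$-a.s., $\mu^\infty(\{\gamma:\Delta\gamma(t)\neq 0\})=0$ for every fixed $t$. The paper devotes Step~1 of the proof of Theorem~\ref{convergencemartingale} to this, using a Portmanteau/open-set argument together with explicit control of the probability that any big or small jump occurs in a shrinking window $(t-\eps,t+\eps)$. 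Without this step the passage $\E[F(\mu^N)]\to\E[F(\mu^\infty)]$ is not justified, so you should insert it before applying the martingale-problem identification. A minor side remark: the a~priori bound the paper actually proves is $\sup_N\E[\sup_{s\le t}|X^{N,1}_s|]<\infty$ (first moment of the running sup), not the second-moment version you state; this is all that Aldous's criterion needs here.
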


\begin{cor}
Under the assumptions of Theorem~\ref{convergencemuN}, $(X^{N,j})_{1\leq j\leq N}$ converges in distribution to $(\bar X^j)_{j\geq 1}$ in $D(\r_+,\r)^{\n^*}.$
\end{cor}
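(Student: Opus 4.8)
The plan is to derive the convergence of the $k$-dimensional marginals, for every fixed $k$, from the convergence of the empirical measures (Theorem~\ref{convergencemuN}), using the exchangeability of the finite system and the conditional independence of the limit system established in Proposition~\ref{independence}. Since $D(\r_+,\r)$ is Polish, so is $D(\r_+,\r)^{\n^*}$ with the product topology, and convergence in distribution there is equivalent to the joint convergence of all finite-dimensional projections together with tightness of the sequence. The latter holds because the law of each coordinate $X^{N,i}$ equals $\E[\mu^N]$ by exchangeability, and $\E[\mu^N]$ is the barycenter of $\mathcal{L}(\mu^N)$, hence converges (to $\mathcal{L}(\limY^1)$) by Theorem~\ref{convergencemuN} and is therefore tight; alternatively one invokes directly the tightness of $\mu^N$ from Proposition~\ref{mutight}. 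It thus suffices to show, for every $k\geq 1$ and all bounded continuous $F_1,\dots,F_k:D(\r_+,\r)\to\r$, that
\begin{equation*}
\E\left[\prod_{i=1}^k F_i(X^{N,i})\right] \xrightarrow[N\to\infty]{} \E\left[\prod_{i=1}^k F_i(\limY^i)\right].
\end{equation*}

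First I would record that $(X^{N,1},\dots,X^{N,N})$ is exchangeable for each $N$: under Assumptions~\ref{ass:1} and~\ref{control}, pathwise uniqueness holds for~\eqref{eq:dyn}, so its solution is a measurable functional of the i.i.d.\ data $(\pi^i,X^{N,i}_0)_{1\leq i\leq N}$; since~\eqref{eq:dyn} is invariant under permutations of the index set, permuting the driving data permutes the solution. The core step is then the standard comparison between the joint law of $k$ coordinates and a bounded continuous functional of the empirical measure. Expanding $\prod_{i=1}^k\mu^N(F_i)=N^{-k}\sum_{i_1,\dots,i_k}\prod_{\ell=1}^k F_\ell(X^{N,i_\ell})$ and separating the $N(N-1)\cdots(N-k+1)$ tuples of pairwise distinct indices, which by exchangeability all contribute $\E[\prod_\ell F_\ell(X^{N,\ell})]$, from the remaining $N^k-N(N-1)\cdots(N-k+1)=O(N^{k-1})$ tuples, each bounded in absolute value by $\prod_\ell\|F_\ell\|_\infty$, one obtains
\begin{equation*}
\E\left[\prod_{i=1}^k F_i(X^{N,i})\right] = \E\left[\prod_{i=1}^k \mu^N(F_i)\right] + O\!\left(\frac1N\right).
\end{equation*}

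The map $\Phi(m):=\prod_{i=1}^k m(F_i)$ is bounded and weakly continuous on $\mathcal{P}(D(\r_+,\r))$, so Theorem~\ref{convergencemuN} yields $\E[\Phi(\mu^N)]\to\E[\Phi(\mu)]$ with $\mu=\mathcal{L}(\limY^1\mid\W)$. Finally, by Proposition~\ref{independence} the coordinates $\limY^1,\dots,\limY^k$ are independent given $W$ with common conditional law $\mu$, whence
\begin{equation*}
\E[\Phi(\mu)] = \E\left[\prod_{i=1}^k\E\big[F_i(\limY^i)\,\big|\,\W\big]\right] = \E\left[\prod_{i=1}^k F_i(\limY^i)\right].
\end{equation*}
Combining the three displays and letting $N\to\infty$ gives the marginal convergence, and with the tightness noted above this proves the corollary.

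I expect the only mildly delicate point to be the combinatorial estimate $\E[\prod_i F_i(X^{N,i})]=\E[\Phi(\mu^N)]+O(1/N)$: this is exactly where exchangeability is used, via the observation that a sub-vector $(X^{N,i_1},\dots,X^{N,i_k})$ indexed by distinct $i_1,\dots,i_k$ has the same law as $(X^{N,1},\dots,X^{N,k})$, together with the elementary count of non-distinct tuples. Everything else is a direct consequence of what is already available — the weak convergence and identification of $\mu$ in Theorem~\ref{convergencemuN}, and the conditional independence of the limit coordinates in Proposition~\ref{independence} — so no new tightness or martingale-problem argument is needed here.
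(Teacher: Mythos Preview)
Your argument is correct. The paper's own proof is a single line: it invokes Proposition~7.20 of Aldous (\emph{Exchangeability and related topics}), which packages exactly the implication you prove by hand --- namely that for an exchangeable array, convergence in distribution of the empirical measure $\mu^N$ to a random measure $\mu$ entails convergence of $(X^{N,1},\dots,X^{N,k})$ to a $\mu$-i.i.d.\ vector for every $k$, hence convergence in $D(\r_+,\r)^{\n^*}$.

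What you do differently is unpack that black box: the combinatorial identity $\E\big[\prod_i F_i(X^{N,i})\big]=\E\big[\prod_i \mu^N(F_i)\big]+O(1/N)$ via the count of distinct $k$-tuples, followed by the continuity of $m\mapsto\prod_i m(F_i)$ and the identification of $\E[\prod_i\mu(F_i)]$ using the conditional independence of Proposition~\ref{independence}. This is precisely the mechanism behind Aldous's result, so your approach buys self-containment at the price of length, while the paper's buys brevity at the price of an external reference. One small remark: in a countable product of Polish spaces with the product topology, convergence of all finite-dimensional marginals already implies weak convergence, so the separate tightness verification you give is harmless but not needed.
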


\begin{proof}
Together with the statement of Theorem \ref{convergencemuN}, the proof is an immediate consequence of Proposition~7.20 of \cite{aldous_exchangeability_1983}.
\end{proof}
  
We will prove Theorem~\ref{convergencemuN} in a two step procedure. Firstly we prove the tightness of the sequence of empirical measures, and then in a second step we identify all possible limits as solutions of a martingale problem.

\section{Proof of Theorem~\ref{convergencemuN}}
\label{secthm14}

This section is dedicated to prove that the sequence $(\mu^N)_N$ of the empirical measures $\mu^N:=\sum_{j=1}^N\delta_{(X^j_t)_{t\geq 0}}$ converges in distribution to $\mu:=\mathcal{L}(\bar X^1|\W)$, where $(\bar X^j)_{j\geq 1}$ is solution of~\eqref{eq:dynlimit1}.

In a first time, we prove that the sequence $(\mu^N)_N$ is tight on $\mathcal{P}(D(\r_+,\r))$. The main step to prove the convergence of $(\mu^N)_N$ is then to show that each converging subsequence converges to the same limit in distribution. For this purpose, we introduce a new martingale problem, and we show that every possible limit of $\mu^N$ is a solution of this martingale problem. Finally, we will show how the uniqueness of the limit law follows from the exchangeability of the system.

\subsection{Tightness of $(\mu^N)_N$}\label{sec:21}

\begin{prop}\label{mutight}
Grant Assumptions \ref{ass:1} and \ref{control}. For each $N\geq 1$, consider the unique solution $(X^N_t)_{t\geq 0}$ to \eqref{eq:dyn} starting from some i.i.d. $\nu_0$-distributed initial conditions $X^{N,i}_0$.
\begin{itemize}
\item[(i)] The sequence of processes $(X^{N,1}_t)_{t\geq 0}$ is tight in $\D(\R_+, \R)$.
\item[(ii)] The sequence of empirical measures $ \mu^N=N^{-1}\sum_{i=1}^N \delta_{(X^{N,i}_t)_{t\geq 0}}$
is tight in ${\mathcal P}(\D(\R_+, \R))$.
\end{itemize}
\end{prop}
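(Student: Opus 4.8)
\emph{Strategy.} The plan is to prove item (i) by applying Aldous's tightness criterion to the semimartingale $X^{N,1}$, after deriving second moment bounds that are uniform in $N$, and then to deduce (ii) from (i) using the exchangeability of the finite system $(X^{N,1},\dots,X^{N,N})$ (recall that tightness in $\D(\r_+,\r)$ is equivalent to tightness in $\D([0,T],\r)$ for every $T>0$).

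\emph{Step 1: uniform moment bounds.} First I would show that, for every $T>0$, $\sup_{N}\sup_{0\le t\le T}\E[(X^{N,1}_t)^2]<\infty$. Write the solution of \eqref{eq:dyn} as $X^{N,1}_t=X^{N,1}_0-\alpha\int_0^t X^{N,1}_s\,ds-B^{N,1}_t+M^{N,1}_t$ with reset term $B^{N,1}_t=\int_{[0,t]\times\r_+\times\r}X^{N,1}_{s-}\indiq_{\{z\le f(X^{N,1}_{s-})\}}\pi^1(ds,dz,du)$ and $M^{N,1}_t=\frac1{\sqrt N}\sum_{j\neq1}\int_{[0,t]\times\r_+\times\r}u\,\indiq_{\{z\le f(X^{N,j}_{s-})\}}\pi^j(ds,dz,du)$. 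Since $\nu$ is centred (Assumption~\ref{control}), $M^{N,1}$ is a local martingale with predictable quadratic variation $\langle M^{N,1}\rangle_t=\frac{\sigma^2}N\sum_{j\neq1}\int_0^t f(X^{N,j}_s)\,ds$, where $\sigma^2:=\int_\r u^2\,d\nu(u)$. Applying Itô's formula to $(X^{N,1}_t)^2$, localising to make all terms integrable, taking expectations, and discarding the non-positive contributions $-2\alpha(X^{N,1}_s)^2$ and $-(X^{N,1}_s)^2 f(X^{N,1}_s)$ of the drift and the reset, one obtains $\E[(X^{N,1}_t)^2]\le\int_\r x^2\,d\nu_0(x)+\sigma^2\int_0^t\frac1N\sum_{j\neq1}\E[f(X^{N,j}_s)]\,ds$. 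By exchangeability $\E[f(X^{N,j}_s)]=\E[f(X^{N,1}_s)]$, and since $f$ is Lipschitz $\E[f(X^{N,1}_s)]\le C(1+\E[(X^{N,1}_s)^2])$; Grönwall's lemma then yields the claimed bound, with a constant depending only on $T$ and the coefficients. In particular $\sup_N\sup_{t\le T}\E[|X^{N,1}_t|]<\infty$ and $\sup_N\sup_{t\le T}\E[f(X^{N,1}_t)^2]<\infty$, and $M^{N,1}$ is a genuine square-integrable martingale on $[0,T]$.

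\emph{Step 2: tightness of $(X^{N,1})_N$.} I would then check Aldous's criterion on each compact $[0,T]$. Compact containment follows from Step~1 and Markov's inequality. For the oscillation condition, fix $\eta>0$, let $\tau$ be a stopping time (with respect to the natural filtration of the Poisson measures) bounded by $T$, and let $\delta>0$; from the decomposition above,
\begin{equation*}
P\big(|X^{N,1}_{\tau+\delta}-X^{N,1}_\tau|>\eta\big)\le P\Big(\alpha\int_\tau^{\tau+\delta}|X^{N,1}_s|\,ds>\tfrac{\eta}{3}\Big)+P\big(|B^{N,1}_{\tau+\delta}-B^{N,1}_\tau|>\tfrac{\eta}{3}\big)+P\big(|M^{N,1}_{\tau+\delta}-M^{N,1}_\tau|>\tfrac{\eta}{3}\big).
\end{equation*}
Applying the Cauchy--Schwarz inequality twice together with Step~1, $\E\int_\tau^{\tau+\delta}|X^{N,1}_s|\,ds=\int_0^T\E[\indiq_{\{\tau\le s<\tau+\delta\}}|X^{N,1}_s|]\,ds\le C_T\sqrt\delta$, and likewise $\E\int_\tau^{\tau+\delta}f(X^{N,1}_s)\,ds\le C_T\sqrt\delta$ (using $f(x)^2\le C(1+x^2)$); hence the first probability is at most $C_T\sqrt\delta/\eta$. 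For the reset term I would not bound the \emph{size} of the jumps of $B^{N,1}$, but only the probability that a jump occurs, which is at most the probability that $\pi^1$ charges a point $(s,z,u)$ with $\tau<s\le\tau+\delta$ and $z\le f(X^{N,1}_{s-})$, hence at most $\E\int_\tau^{\tau+\delta}f(X^{N,1}_s)\,ds\le C_T\sqrt\delta$. Finally, by Chebyshev's inequality and the optional stopping theorem for the martingale $(M^{N,1}_t)^2-\langle M^{N,1}\rangle_t$, $P\big(|M^{N,1}_{\tau+\delta}-M^{N,1}_\tau|>\eta/3\big)\le\frac9{\eta^2}\E\big[\langle M^{N,1}\rangle_{\tau+\delta}-\langle M^{N,1}\rangle_\tau\big]=\frac{9\sigma^2}{\eta^2}\E\int_\tau^{\tau+\delta}\frac1N\sum_{j\neq1}f(X^{N,j}_s)\,ds\le\frac{9\sigma^2}{\eta^2}C_T\sqrt\delta$, again using exchangeability and Step~1. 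All three bounds are uniform in $N$ and in $\tau$ and vanish as $\delta\to0$, which proves (i).

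\emph{Step 3 and the main obstacle.} Since $(X^{N,1},\dots,X^{N,N})$ is exchangeable, the intensity measure of $\mu^N$ equals $\E[\mu^N]=\mathcal L(X^{N,1})$, and a classical result on empirical measures of exchangeable systems (see, e.g., \cite{aldous_exchangeability_1983}) states that the family of laws of $\mu^N$ is tight in $\mathcal P(\mathcal P(\D(\r_+,\r)))$ if and only if the family $(\mathcal L(X^{N,1}))_N$ is tight in $\mathcal P(\D(\r_+,\r))$; the latter is exactly (i), which gives (ii). The step I expect to be the main obstacle is the reset term in Step~2: since $x\mapsto xf(x)$ is unbounded while $\nu$ has only a second moment — so that no moment of $X^{N,1}$ of order strictly larger than $2$ is available — one cannot control the increments of $B^{N,1}$ through the size of its jumps. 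The way around this, as above, is to estimate instead the probability that neuron~$1$ spikes during the short time interval $(\tau,\tau+\delta]$, for which only the integrability of $f(X^{N,1}_s)$ established in Step~1 is needed.
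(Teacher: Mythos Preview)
Your proof is correct and follows the same strategy as the paper: Aldous's criterion for (i) and exchangeability for (ii) (the paper cites \cite{sznitman_topics_1989} rather than \cite{aldous_exchangeability_1983}), with the same key device of controlling the reset term through the \emph{probability} that neuron~$1$ spikes in $(\tau,\tau+\delta]$ rather than through the jump size. The one substantive difference is that the paper's argument tacitly uses boundedness of $f$ --- both directly, writing $\|f\|_\infty\delta$ for the spike probability and for the martingale increment, and indirectly via Lemma~\ref{estimate}, which is stated under the extra hypothesis ``$f$ bounded'' --- whereas you work strictly under Assumptions~\ref{ass:1} and~\ref{control} and apply Cauchy--Schwarz twice (once in $\omega$, once in $s$) to obtain $\sqrt\delta$ bounds. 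Your route is thus more faithful to the proposition's stated hypotheses, at the price of a slightly weaker rate in $\delta$. One small point to tighten: Step~1 yields $\sup_{t\le T}\E[(X^{N,1}_t)^2]<\infty$, i.e.\ pointwise tightness, while the version of Aldous's criterion in \cite{jacod_limit_2003} used by the paper asks for control of $\sup_{t\le T}|X^{N,1}_t|$; either invoke the form of the criterion based on pointwise tightness, or derive $\sup_N\E\big[\sup_{t\le T}|X^{N,1}_t|\big]<\infty$ from Step~1 via Burkholder--Davis--Gundy on $M^{N,1}$ together with the bound $\E\int_0^T|X^{N,1}_s|f(X^{N,1}_s)\,ds\le C_T$ on the accumulated reset jumps.
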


\begin{proof}
First, it is well-known that point (ii) follows from point (i) and the exchangeability 
of the system, see  \cite[Proposition 2.2-(ii)]{sznitman_topics_1989}. We thus
only prove (i). To show that the family $((X^{N,1}_t)_{t\geq 0})_{N\geq 1}$ is tight in $\D(\R_+,\R)$, 
we use the criterion of Aldous, see Theorem~4.5 of \cite{jacod_limit_2003}. It is sufficient to prove that
\begin{itemize}
\item[(a)] for all $ T> 0$, all $\varepsilon >0$,
$ \lim_{ \delta \downarrow 0} \limsup_{N \to \infty } \sup_{ (S,S') \in A_{\delta,T}} 
P ( |X_{S'}^{N, 1 } - X_S^{N , 1 } | > \varepsilon ) = 0$,
where $A_{\delta,T}$ is the set of all pairs of stopping times $(S,S')$ such that
$0\leq S \leq S'\leq S+\delta\leq T$ a.s.,
\item[(b)] for all $ T> 0$, $\lim_{ K \uparrow \infty } \sup_N 
P ( \sup_{ t \in [0, T ] } |X_t^{N, 1 }| \geq K ) = 0$.
\end{itemize}

To check (a), consider $(S,S')\in A_{\delta,T}$ and write
\begin{multline*}
X_{S'}^{N, 1 } - X_S^{N , 1 } =  
- \int_S^{S'} \int_\R \int_0^\infty X^{ N, 1 }_{s- }  \indiq_{\{ z \le f ( X_{s- }^{N, 1} ) \}} \bN^1 (ds, du,  dz ) - \alpha  \int_S^{S'} X^{N, 1 }_s  ds 
\\
+ \frac{1}{ \sqrt{N} } \sum_{j=2}^N \int_S^{S'} \int_\R \int_0^\infty  u  \indiq_{\{ z \le f ( X_{s- }^{N, j} ) \}} 
\bN^j (ds, du, dz ) ,
\end{multline*}
implying that 
\begin{multline*}
|X_{S'}^{N, 1 } - X_S^{N , 1 }|  \le  |  \int_S^{S'} \int_\R \int_0^\infty X^{ N, 1 }_{s- }  \indiq_{\{ z \le f ( X_{s- }^{N, 1} ) \}} \bN^1 (ds, du,  dz ) |  \\
+ \delta\alpha\underset{0\leq s\leq T}{\sup}\ll|X_s^{N,1}\rr|   + |  \frac{1}{ \sqrt{N} } \sum_{j=2}^N \int_S^{S'} \int_\R \int_0^\infty  u  \indiq_{\{ z \le f ( X_{s- }^{N, j} ) \}} 
\bN^j (ds, du, dz ) | \\
=: |I_{S, S'}|+ \delta\alpha\underset{0\leq s\leq T}{\sup}\ll|X_s^{N,1}\rr| + |J_{S, S'}|.
\end{multline*}
We first note that $|I_{S,S'}|>0$ implies that 
$\tilde I_{S,S'}:=
\int_S^{S'} \int_\R \int_0^\infty \indiq_{\{ z \le f ( X_{s- }^{N, 1} ) \}} \bN^i (ds, du, dz)\geq 1$, whence
$$
P ( |I_{S, S'}| > 0 )\leq P (\tilde I_{S,S'}\geq 1)\leq \E[\tilde I_{S,S'}]\le 
\E\Big[ \int_S^{S+\delta} f( X_s^{N, 1 } ) ds \Big] \le ||f||_\infty\delta, 
$$
since $ f$ is bounded. 
We proceed similarly to check that
$$
P ( |J_{S, S'}| \geq \varepsilon ) \le \frac{1}{\varepsilon^2} \E[(J_{S,S'})^2 ]\leq    \frac{\sigma^2}{N\varepsilon^2 } \sum_{j=2}^N\E\Big[ \int_S^{S+\delta} f( X_s^{N, j} ) ds\Big]
\le  \frac{\sigma^2}{\varepsilon^2}  \| f \|_\infty \delta.
$$
The term $\sup_{0\leq s\leq T}|X^{N,1}_s|$ can be handled using Lemma~\ref{estimate}.(ii).

Finally (b) is a straightforward consequence of Lemma~\ref{estimate}.(ii) and Markov's inequality. 

\end{proof}

\subsection{Martingale problem}\label{sec:22}

We now introduce a new martingale problem, whose solutions are the limits of any converging subsequence of $ \mu^N =\frac1{N}\sum_{j=1}^{N}\delta_{X^{N,j}}$. In this martingale problem, we are interested in couples of trajectories to be able to put hands on the correlations between the particles. In particular, this will allow us to show that, in the limit system~\eqref{eq:dynlimit1}, the processes $\bar X^i$ ($i\geq 1$) share the same Brownian motion, but are driven by Poisson measures $\pi^i$ ($i\geq 1$) which are independent. The reason why we only need to study the correlation between two particles is the exchangeability of the infinite system. 

Let $Q$ be a distribution on $\mathcal{P}(D(\r_+,\r))$. Define a probability measure $P$ on $\mathcal{P}(D(\r_+,\r))\times D(\r_+,\r)^2$ by
\begin{equation}\label{eq:P}
P(A\times B):=\int_{\mathcal{P}(D(\r_+,\r))}\un_A( m)  m \otimes m (B)Q(d m).
\end{equation}

We write any atomic event $\omega\in\Omega:=\mathcal{P}(D(\r_+,\r))\times D(\r_+,\r)^2$ as $\omega=(\mu,Y),$ with $Y=(Y^1,Y^2).$ Thus, the law of the canonical variable $ \mu$ is $Q$, and that of $(Y_t)_{t\geq 0}$ is
$$P_Y = \int_{\mathcal{P}(D(\r_+,\r))}Q(d  m)  m \otimes m (\cdot).$$
Moreover we have $P-$ almost surely
$$ \mu=\mathcal{L}(Y^1 | \mu) = \mathcal{L}(Y^2 | \mu) \mbox{ and } \mathcal{L} ( Y| \mu) = \mu \otimes \mu .$$
Writing $ \mu_t := \int_{ D(\r_+,\r) } \mu ( d \gamma ) \delta_{ \gamma_t } $ for the projection onto the $t-$th time coordinate, we introduce the filtration
$$\mathcal{G}_t=\sigma(Y_s,s\leq t)\vee\sigma( \mu_s(f),s\leq t) .$$

\begin{defi}
We say that $Q \in {\mathcal P} ( {\mathcal P} ( D( \R_+, \R ) ) ) $ is a solution to the martingale problem $({\mathcal M}) $ if the following holds.
\begin{itemize}
\item[(i)]  $Q-$almost surely, $\mu_0 = \nu_0.$
\item[(ii)] For all $ g \in C^2_b ( \R^2), $ $M_t^g := g(Y_t) - g(Y_0) - \int_0^t L g( \mu_s, Y_s) ds $
is a $( P,  ({\mathcal G}_t)_t)-$martingale, where 
\begin{align*}
L g (  \mu, x) =& -\alpha x^1\partial_{x^1}g(x)-\alpha x^2\partial_{x^2} g(x)+\frac{\sigma^2}{2} \mu(f) \sum_{i,j=1}^2\partial^2_{x^i x^j}g(x)\\
&+f(x^1)(g(0,x^2)-g(x))+f(x^2)(g(x^1,0)-g(x)).
\end{align*}
\end{itemize}
\end{defi}

\begin{rem}
It is not clear if the martingale problem is well-posed, but we are not interested in proving uniqueness for it. However, we will have uniqueness within the class of all possible limits in distribution of $ \mu^N.$ More precisely, we shall prove that, if $\mu$ is a limit in distribution of $ \mu^N$  such that $\mathcal{L}(\mu)$ is solution to~$(\mathcal{M})$, then $ \mu = {\mathcal L} ( \bar X | \W ) , $ with $\bar X $ the strong solution of \eqref{eq:dynlimit}. Equivalently, defining the problem~$(\mathcal{M})$ for all finite-dimensional distributions, and not only for two coordinates, where $Y^i$ ($i\geq 1$) are defined as a mixture directed by $\mu$, would lead to uniqueness.
\end{rem}

Let $(\limY^i)_{i\geq 1}$ be the solution of the limit system~\eqref{eq:dynlimit1} and $ \mu = \mathcal{L}(\limY^1 |\W)$. Then we already know that $ \mathcal{L}( \mu) $ is a solution of $({\mathcal M}) .$ Let us now characterise any possible solution of $(\mathcal{M}).$

\begin{lem}
\label{representation}
Grant Assumption~\ref{ass:2}. Let $Q \in {\mathcal P} ( {\mathcal P} ( D( \R_+, \R ) ) )$ be a solution of $(\mathcal{M}).$ Let $(\mu , Y)$ be the canonical variable defined above, and write $Y=(Y^1,Y^2)$. Then there exists a standard $(\mathcal{G}_t)_t-$Brownian motion $W$ and on an extension $ (\tilde \Omega, (\mathcal{\tilde G}_t)_t, \tilde P) $ of $ ( \Omega, ({\mathcal G}_t)_t, P) $ there exist $(\mathcal{\tilde G}_t)_t-$ Poisson random measures $\pi^1,\pi^2$ on $\r_+\times\r_+$ having Lebesgue intensity such that $W,\pi^1$ and $\pi^2$ are independent and
\begin{align*}
dY^1_t=&-\alpha Y^1_tdt+\sigma\sqrt{\mu_t(f)}dW_t-Y^1_{t-}\int_{\r_+}\uno{z\leq f(Y^1_{t-})}\pi^1(dt,dz),\\
dY^2_t=&-\alpha Y^2_tdt+\sigma\sqrt{\mu_t(f)}dW_t-Y^2_{t-}\int_{\r_+}\uno{z\leq f(Y^2_{t-})}\pi^2(dt,dz).
\end{align*}
\end{lem}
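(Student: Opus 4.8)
The plan is to extract the semimartingale characteristics of $(Y^1, Y^2)$ from the martingale problem $(\mathcal{M})$ and then invoke a representation theorem for semimartingales (in the style of Jacod--Shiryaev or El Karoui--M\'el\'eard) to produce the driving Brownian motion and Poisson random measures. First I would apply the martingale problem to the coordinate functions. Taking $g(x) = x^1$ shows that $Y^1_t - Y^1_0 + \alpha\int_0^t Y^1_s\,ds + \int_0^t Y^1_s f(Y^1_s)\,ds$ is a $(\mathcal{G}_t)$-martingale, call it $N^1_t$; similarly for $Y^2$, giving $N^2_t$. Taking $g(x) = (x^1)^2$, $g(x) = (x^2)^2$ and $g(x) = x^1 x^2$ and subtracting the appropriate compensator terms identified via $Lg$, I would read off the predictable quadratic covariations: $\langle N^{1,c}, N^{1,c}\rangle_t = \sigma^2\int_0^t \mu_s(f)\,ds$, $\langle N^{2,c}, N^{2,c}\rangle_t = \sigma^2\int_0^t \mu_s(f)\,ds$, and crucially $\langle N^{1,c}, N^{2,c}\rangle_t = \sigma^2\int_0^t \mu_s(f)\,ds$ as well — so the two continuous martingale parts have the \emph{same} quadratic variation and are perfectly correlated, which is exactly what forces a single common Brownian motion. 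The jump structure is similarly read off: the only jumps of $Y^i$ are from $Y^i_{s-}$ to $0$, occurring at rate $f(Y^i_{s-})$, and the cross term in $g(x)=x^1x^2$ reveals no simultaneous jumps, so the jump measures of $Y^1$ and $Y^2$ are "orthogonal."

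Next I would construct the objects. Since $\mu_s(f) \geq \inf f > 0$ by Assumption~\ref{ass:2}, the continuous martingale part $M^{1,c}_t$ has strictly increasing quadratic variation $\sigma^2\int_0^t \mu_s(f)\,ds$, so I can set $W_t := \frac{1}{\sigma}\int_0^t (\mu_s(f))^{-1/2}\,dM^{1,c}_s$, which is a continuous local martingale with $\langle W\rangle_t = t$, hence a $(\mathcal{G}_t)$-Brownian motion by L\'evy's characterization; and then $M^{1,c}_t = \sigma\int_0^t\sqrt{\mu_s(f)}\,dW_s$ by construction. Because $\langle M^{1,c} - M^{2,c}\rangle_t = \langle M^{1,c}\rangle_t - 2\langle M^{1,c},M^{2,c}\rangle_t + \langle M^{2,c}\rangle_t = 0$, we get $M^{2,c} = M^{1,c}$ and hence $M^{2,c}_t = \sigma\int_0^t\sqrt{\mu_s(f)}\,dW_s$ too. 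For the jump parts, I would use the standard representation of a jump martingale with a given compensator: the jump of $Y^i$ carries the deterministic mark "reset to $0$," and its compensator is $f(Y^i_{s-})\,ds$; on a suitable enlargement $(\tilde\Omega, \tilde{\mathcal{G}}_t, \tilde P)$ one builds Poisson random measures $\pi^1, \pi^2$ on $\R_+\times\R_+$ with Lebesgue intensity so that the jump of $Y^i$ at time $t$ is represented as $-Y^i_{t-}\int_{\R_+}\uno{z\leq f(Y^i_{t-})}\pi^i(dt,dz)$; the orthogonality of the two jump measures plus the absence of common jumps with the continuous part gives independence of $\pi^1, \pi^2$ and of both from $W$. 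This is precisely the mechanism of El Karoui--M\'el\'eard (``Martingale measures and stochastic calculus'') or Theorem~III.2.26 / the representation results in Jacod--Shiryaev, possibly after enlarging the space to accommodate the randomization needed when $f$ can vanish — but here $\inf f > 0$, so the randomization is only the standard one for turning a point process into a thinned Poisson measure.

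I would then assemble: $Y^1_t = Y^1_0 + M^{1,c}_t + (\text{jump martingale}) + (\text{compensators})$, and substituting the representations yields exactly the claimed SDE for $Y^1$, and likewise for $Y^2$ with the same $W$ but its own $\pi^2$. Item~(i) of the martingale problem gives $Y^i_0 \sim \nu_0$, matching the initial condition. The main obstacle I anticipate is the careful bookkeeping in the representation theorem: verifying that the enlargement can be done so that $W$, $\pi^1$, $\pi^2$ are jointly independent (not merely pairwise), and that $W$ remains a Brownian motion and the $\pi^i$ remain Poisson with respect to the enlarged filtration $(\tilde{\mathcal{G}}_t)$. This requires checking that the continuous and purely discontinuous martingale parts are orthogonal (automatic) and that the two compensated jump measures have no common jumps — which follows from the cross-term computation $Lg$ for $g(x) = x^1x^2$ showing the bracket of the two jump martingales vanishes — and then appealing to the standard extension lemma (e.g. Lemma~2.1 in El Karoui--M\'el\'eard, or the construction in the appendix of \cite{graham_mckean-vlasov_1992-1}). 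Everything else is routine application of It\^o's formula and L\'evy's characterization.
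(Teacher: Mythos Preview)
Your approach is essentially the same as the paper's: extract the semimartingale characteristics of $(Y^1,Y^2)$ from the martingale problem, use the fact that all four entries of $C$ coincide to force a single driving Brownian motion, and then invoke a representation theorem for the purely discontinuous part on an enlargement. The paper cites Theorem~II.2.42 of Jacod--Shiryaev to pass from $C^2_b$ test functions to the characteristics (which covers the localisation you would need when plugging in the unbounded $g(x)=x^i$ and $g(x)=x^ix^j$), Theorem~II.7.1 of Ikeda--Watanabe for the Brownian representation (arguing $\rho=1$ rather than $\langle M^{1,c}-M^{2,c}\rangle=0$, which is the same computation), and Theorem~II.7.4 of Ikeda--Watanabe for the jump part, constructing $\pi^1,\pi^2$ explicitly by splitting a single Poisson measure on $\R_+\times\R_+$ via the disjoint regions $\{z\le f(Y^1_{t-})\}$ and $\{\|f\|_\infty<z\le\|f\|_\infty+f(Y^2_{t-})\}$ and then padding with independent Poisson noise on $\{z>\|f\|_\infty\}$.
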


\begin{proof}
Item (ii) of of $(\mathcal{M})$ together with Theorem~II.2.42 of \cite{jacod_limit_2003} imply that $Y$ is a semimartingale with characteristics $(B,C,\nu)$ given by
\begin{align*}
&B^i_t=-\alpha\int_0^tY^i_sds-\int_0^tY_s^if(Y^i_s)ds,~~1\leq i\leq 2,\\
&C_t^{i,j}=\int_0^t  \mu_s(f) ds,~~1\leq i,j\leq 2,\\
&\nu(dt,dy)=dt(f(Y^1_{t-})\delta_{(-Y^1_{t-},0)}(dy)+f(Y^2_{t-})\delta_{(0,-Y^2_{t-})}(dy)).
\end{align*}

Then we can use the canonical representation of $Y$ (see Theorem~II.2.34 of \cite{jacod_limit_2003}) with the truncation function $h(y)=y$ for every $y$: $Y_t-Y_0-B_t=M^c_t+M^d_t,$ where $M^c$ is a continuous local martingale and $M^d$ a purely discontinuous local martingale. By definition of the characteristics, $\langle M^{c,i},M^{c,j}\rangle_t=C^{i,j}_t.$ In particular, $\langle M^{c,i}\rangle_t=\int_0^t \mu_s(f)ds$ ($i=1,2$). Consequently, applying Theorem~II.7.1 of \cite{ikeda_stochastic_1989} to both coordinates, we know that there exist Brownian motions $W^1,W^2$ such that
$$M^{c,i}_t=\int_0^t\sqrt{ \mu_s(f)}dW^i_s,~~i=1,2.$$
We now prove that $W^1=W^2.$ Let $\rho$ be the correlation between $W^1$ and $W^2$. Classical computations give $\langle W^1,W^2\rangle_t=\rho t,$ implying that $\langle M^{c,1},M^{c,2}\rangle_t=\rho\int_0^t \mu_s(f)ds$. In addition $\langle M^{c,1},M^{c,2}\rangle_t=C^{1,2}_t=\int_0^t \mu_s(f)ds$, and this implies that $\rho=1$ and $W^1=W^2,$ since $\int_0^t \mu_s(f) ds>0$ because $f$ is lower-bounded.

We now prove the existence of the independent Poisson measures $\pi^1,\pi^2.$ We know that $M^d=h*(\mu^Y-\nu),$ where $\mu^Y=\sum_s\uno{\Delta Y_s\neq 0}\delta_{(s,Y_s)}$ is the jump measure of $Y$ and $\nu$ is its compensator. We rely on Theorem~II.7.4 of \cite{ikeda_stochastic_1989}. Using the notation therein, we introduce $Z=\r_+,$ $m$ Lebesgue measure on $Z$ and
$$\theta(t,z):=(-Y^1_{t-},0)\uno{z\leq f(Y^1_{t-})}+(0,-Y^2_{t-})\uno{||f||_\infty<z\leq ||f||_\infty+f(Y^2_{t-})}.$$
According to Theorem~II.7.4 of \cite{ikeda_stochastic_1989}, there exists a Poisson measure $\pi$ on $\r_+\times\r_+$ having intensity $dt\cdot dz$ such that, for all $E\in\mathcal{B}(\r^2),$
\begin{equation}\label{eq:firstrepr}
\mu^Y([0,t]\times E)=\int_0^t\int_0^\infty\uno{\theta(s,z)\in E}\pi(ds,dz).
\end{equation}

Now let us consider two independent Poisson measures $\tilde\pi^1,\tilde\pi^2$ (independent of everything else) on $[||f||_\infty,\infty[$ having Lebesgue intensity. We then define $\pi^1$ in the following way:  for any $A\in\mathcal{B}(\r_+\times[0,||f||_\infty]),$ $\pi^1(A)=\pi(A)$, and for $A\in\mathcal{B}(\r_+\times]||f||_\infty,\infty[),$ $\pi^1(A)=\tilde\pi^1(A).$ We define $\pi^2$ in a similar way:  For $A\in\mathcal{B}(\r_+\times[0,||f||_\infty]),$ $\pi^2(A)=\pi(\{(t,||f||_\infty+z):(t,z)\in A\})$, and for $A\in\mathcal{B}(\r_+\times]||f||_\infty,\infty[),$ $\pi^2(A)=\tilde\pi^2(A).$ By definition of Poisson measures, $\pi^1$ and $\pi^2$ are independent Poisson measures on $\r_+^2$ having Lebesgue intensity, and together with \eqref{eq:firstrepr},  we have
$$M^{d,i}_t=-\int_{[0,t]\times\r_+}Y^i_{s-}\uno{z\leq f(Y^i_{s-})}\pi^i(ds,dz)+\int_0^tY_s^if(Y^i_s)ds,~~1\leq i\leq 2.$$
\end{proof}

Moreover we have the following
\begin{thm}
\label{convergencemartingale}
Assume that Assumptions \ref{ass:1}, \ref{control} and \ref{ass:2} hold. Then the distribution of any limit $\mu$ of the sequence $ \mu^N :=\frac1{N}\sum_{j=1}^{N}\delta_{X^{N,j}} $ is solution of item (ii) of $({\mathcal M}).$
\end{thm}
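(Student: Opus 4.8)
The plan is to identify the limit of $\mu^N$ as a solution of item (ii) of the martingale problem $(\mathcal M)$ by passing to the limit in a martingale identity satisfied by the empirical measure of the finite system. First I would fix $g \in C^2_b(\R^2)$ and, applying Itô's formula to the two-coordinate process $(X^{N,1}, X^{N,2})$ solving \eqref{eq:dyn}, write down the finite-level martingale: for a well-chosen generator $L^N g$ acting on $(\mu^N_s, X^{N,1}_s, X^{N,2}_s)$, the process $g(X^{N,1}_t, X^{N,2}_t) - g(X^{N,1}_0, X^{N,2}_0) - \int_0^t L^N g(\mu^N_s, X^{N,1}_s, X^{N,2}_s)\,ds$ is a martingale with respect to the natural filtration. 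Here $L^N$ will contain: the transport terms $-\alpha x^i \partial_{x^i} g$; the genuine jump terms $f(x^i)(g(\ldots, 0, \ldots) - g(x))$ coming from neurons $1$ and $2$ spiking; the small cross-jump contributions of size $1/\sqrt N$ coming from all other neurons $j \neq 1,2$ spiking, which after a second-order Taylor expansion produce a term close to $\frac{\sigma^2}{2}\,\mu^N_s(f) \sum_{i,j} \partial^2_{x^i x^j} g(x)$ plus a remainder; and error terms due to the fact that $\mu^N_s(f)$ includes the contributions of particles $1$ and $2$ themselves (an $O(1/N)$ correction) and due to the $j = $ self-interaction term being excluded.

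Next I would set up the limiting functional. For $\mu \in \mathcal P(D(\R_+,\R))$ and $s < t_1 < \ldots < t_k \le s$, and bounded continuous functionals $h$ of the path up to time $s$ and of $(\mu_r(f))_{r \le s}$, define $F(\mu, y) := \big(g(y_t) - g(y_s) - \int_s^t Lg(\mu_r, y_r)\,dr\big)\,h(y_{\cdot \wedge s}, (\mu_r(f))_{r\le s})$ with $Lg$ as in the statement of $(\mathcal M)$. The goal is to show $\E_{P}[F] = 0$ where $P$ is built from $Q = \mathcal L(\mu)$ as in \eqref{eq:P}, i.e. $\E_Q[\,\mu \otimes \mu (F(\mu, \cdot))\,] = 0$. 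By the structure of $P$, this is the limit of $\E\big[\frac{1}{N(N-1)}\sum_{i \neq j} F(\mu^N, (X^{N,i}, X^{N,j}))\big]$, which by exchangeability equals $\E[F(\mu^N, (X^{N,1}, X^{N,2}))]$ up to the diagonal correction. I would then combine the finite-level martingale identity with a convergence argument: the martingale property kills the analogous finite expression with $L^N$ in place of $L$, so it remains to control $\E\big| \int_s^t (L^N g(\mu^N_r, X^{N,1}_r, X^{N,2}_r) - Lg(\mu^N_r, X^{N,1}_r, X^{N,2}_r))\,dr \cdot h(\cdots)\big| \to 0$, and to pass to the limit in the continuous functional $F(\cdot, \cdot)$ along the converging subsequence using the tightness from Proposition \ref{mutight} together with the Skorohod representation theorem, taking care of continuity of $F$ at the (a.s.\ continuous, by the PDMP structure) limit paths and of the a priori moment bounds of Lemma \ref{estimate} to ensure uniform integrability of the unbounded terms $x^i$ and $\mu^N_s(f)$.

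The main obstacle, as I see it, is the control of the error between $L^N g$ and $Lg$, specifically the Taylor-expansion remainder for the small jumps. When neuron $j \neq 1, 2$ spikes, both $X^{N,1}$ and $X^{N,2}$ receive an increment $u/\sqrt N$; summing over $j$ and using that $\pi^j$ has intensity $f(X^{N,j}_{s-})$ and $\nu$ is centred with variance $\sigma^2$, the first-order terms vanish in expectation and the second-order terms aggregate to $\frac{\sigma^2}{2}\mu^N_s(f)\sum_{i,j}\partial^2_{x^ix^j}g$; but the third-order Taylor remainder is of order $N \cdot N^{-3/2} \cdot \|g^{(3)}\|$ — which is not available since $g \in C^2_b$ only. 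One must therefore argue more carefully, exploiting that the remainder is $\frac{1}{2}\sum_j \frac{u_j^2}{N}(\partial^2 g(\xi_j) - \partial^2 g(x))$ with $\partial^2 g$ uniformly continuous, so the remainder is bounded by $\frac{\sigma^2}{2}\mu^N_s(f)\,\omega(\|u\|/\sqrt N)$ for the modulus of continuity $\omega$ of $\partial^2 g$, which tends to $0$; however $u$ is unbounded under $\nu$, so this needs a truncation of $u$ at level $\eps\sqrt N$ (the contribution of $|u| > \eps\sqrt N$ being handled by the second moment of $\nu$ and vanishing as $N \to \infty$ then $\eps \to 0$) — essentially the same central-limit bookkeeping as in \cite{erny_mean_2019}. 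A secondary but genuine difficulty is the diagonal / self-interaction corrections: $\mu^N_s(f) = \frac1N\sum_{k=1}^N f(X^{N,k}_s)$ versus the sum over $k \neq 1, 2$ or $k \neq j$ differs by $O(1/N)$ since $f$ is bounded, so this is routine, but it must be written out. Once these error terms are shown to vanish in $L^1$, the passage to the limit in the bounded continuous part of $F$ along the subsequence (using Skorohod representation, continuity of $(\mu, y) \mapsto g(y_t) - g(y_s)$ and of the path-functional $h$, and $\int_s^t Lg(\mu_r, y_r)\,dr$ being continuous off a null set thanks to $f$ bounded, $a$-type continuity of $x f(x)$, and $r \mapsto \mu_r(f)$ having no fixed discontinuities) concludes that $\E_P[M^g_t\,|\,\mathcal G_s] = \E_P[M^g_s]$ in the required form, i.e.\ item (ii) of $(\mathcal M)$.
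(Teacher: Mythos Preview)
Your overall strategy matches the paper's: write a finite-level It\^o identity for the pair $(X^{N,i},X^{N,j})$, average over $i,j$ to produce $F(\mu^N)$, show $\E[F(\mu^N)]\to 0$ by controlling the Taylor remainder of the small jumps and the diagonal corrections, and pass to the limit by continuity of $F$. Two points deserve comment.

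First, your handling of the Taylor remainder is more laborious than necessary. The paper does not work directly with $g\in C^2_b(\R^2)$: it takes $\varphi\in C^3_b(\R^2)$, so that the third-order Taylor--Lagrange bound gives immediately $|\E[F(\mu^N)]|\le C/\sqrt N$, and then invokes a density argument to get the martingale property for all $C^2_b$ test functions. Your truncation/modulus-of-continuity route would work, but it is avoidable.

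Second, and more importantly, you underestimate the continuity step. You write that $F$ is continuous ``at the (a.s.\ continuous, by the PDMP structure) limit paths''. This is not right: the limit paths are \emph{not} continuous (the reset jumps survive), and in any case you cannot invoke ``the PDMP structure'' of the limit before you have identified the limit. What is needed is that, for each fixed $t\ge 0$, almost surely $\mu(\{\gamma:\Delta\gamma(t)\neq 0\})=0$; without this, the evaluation maps $\gamma\mapsto\gamma_t$ and $\mu\mapsto\mu_t(f)$ appearing in $F$ are not continuous at $\mu$ in the Skorohod topology, and the passage $\E[F(\mu^N)]\to\E[F(\mu)]$ fails. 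The paper devotes its Step~1 to this: it argues by contradiction via Portmanteau, reducing to an estimate of the form $P(\mu^N(\mathcal B^\varepsilon_a)>b)\le C\varepsilon$ uniformly in $N$, where $\mathcal B^\varepsilon_a$ is the set of paths with a jump of size $>a$ in $(t-\varepsilon,t+\varepsilon)$. This in turn requires separately bounding the probability that $X^{N,1}$ has a big (reset) jump in a short window, and the probability that a small jump $u/\sqrt N$ exceeds $a$, using the boundedness of $f$ and the second moment of $\nu$. This is a genuine piece of the proof, not a formality, and your proposal should include it.
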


\begin{proof}
{\it Step~1.} We first check that for any $t\geq 0$, a.s., $\mu(\{\gamma \, : \, \Delta\gamma(t)\neq 0\})=0$.
We assume by contradiction that there exists $t > 0 $ 
such that $\mu ( \{ \gamma  : \Delta \gamma (t) \neq 0 \} ) > 0 $
with positive probability. Hence there are $a,b>0$ such that the event 
$E:=\{\mu ( \{ \gamma : |\Delta \gamma (t) |  > a  \} ) > b\}$ has a 
positive probability. For every $\varepsilon > 0$, we have
$E\subset \{ \mu (  \cB^\varepsilon_a  ) > b\}$, where 
$\cB^\varepsilon_a := \{ \gamma : \sup_{ s \in (t- \varepsilon , t + \varepsilon)}| \Delta \gamma(s) | > a \}$, which
is an open subset of $D(\R_+,\R)$. Thus $\cP^{\varepsilon}_{a,b} := 
\{  \mu \in {\cP} ( {D} ( \R_+, \R ) ) : \mu (  \cB^\varepsilon_a  ) > b \}$
is an open subset of $ {\cP} ( {\D} ( \R_+,\r) )$. 
The Portmanteau theorem implies then that for any $\varepsilon>0$,
\begin{equation}
\label{controlPE}
\liminf_{N \to \infty } P ( \mu^N \in \cP^{\varepsilon}_{a,b}  ) \geq P ( \mu \in \cP^{\varepsilon}_{a,b}  ) 
\geq P ( E)  > 0.
\end{equation}
Firstly, we can write
$$J^{N,\eps,i}:=\underset{t-\eps<s<t+\eps}{\sup}\ll|\Delta X^{N,i}_s\rr|=G^{\eps,i}_N\vee S^\eps_N,$$
where $G^{\eps,i}_N:=\max_{s\in D^{\eps,i}_N}|X^{N,i}_{s-}|$ is the maximal height of the big jumps of $X^{N,i},$ with $D^{\eps,i}_N:=\{t-\eps\leq s\leq t+\eps:\pi^i(\{s\}\times[0,f(X^{N,i}_{s-})]\times\r_+)\neq 0\}.$ Moreover,  $S^{\eps}_N:=\max\{|U^j(s)|/\sqrt{N}:s\in\bigcup_{1\leq j\leq N} D^{\eps,j}_N\}$ is the maximal height of the small jumps of $X^{N,i}$, where $U^j(s)$ is defined for $s\in D^{\eps,j}_N$, almost surely, as the only real number that satisfies $\pi^j(\{s\}\times[0,f(X^{N,j}_{s-})]\times\{U^j(s)\})=1.$

We have that
$$\ll\{\mu^N(\mathcal{B}^\eps_a)>b\rr\}= \ll\{\frac1N\sum_{j=1}^N\uno{J^{N,\eps,j}>a}> b  \rr\}.$$
Consequently, by exchangeability and Markov's inequality,
\begin{equation}
\label{probamuN}
\pro{\mu^N(\mathcal{B}^\eps_a)>b}\leq \frac1b\esp{\uno{J^{N,\eps,1}>a}}= \frac1b\pro{J^{N,\eps,1}>a}\leq\frac1b\ll(\pro{G_N^{\eps,1}>a}+\pro{S_N^\eps>a}\rr).
\end{equation}

The number of big jumps of $X^{N,1}$ in $]t-\eps,t+\eps[$ is smaller than a random variable $\xi$ having Poisson distribution with parameter $2\eps||f||_\infty.$ Hence
\begin{equation}
\label{probaGN}
\pro{G_{N,1}^\eps>a}\leq \pro{\xi\geq 1}=1-e^{2\eps||f||_\infty}\leq 2\eps||f||_\infty.
\end{equation}

The small jumps that occur in $]t-\eps,t+\eps[$ are included in $\{U_1/\sqrt{N},...,U_K/\sqrt{N}\}$ where $K$ is a $\n-$valued random variable having Poisson distribution with parameter $2\eps N||f||_\infty$, which is independent of the variables $U_i$ ($i\geq 1$) that are i.i.d. with distribution~$\nu.$ Hence, 
\begin{equation*}
\pro{S_N^\eps>a}\leq \pro{\underset{1\leq i\leq K}{\max}\frac{|U_i|}{\sqrt{N}}>a}\leq \esp{\proc{\underset{1\leq i\leq K}{\max}\frac{|U_i|}{\sqrt{N}}>a}{K}}=\esp{\psi(K)},
\end{equation*}
where $\psi(k)=\pro{\max_{1\leq i\leq k}|U_i|>a\sqrt{N}}\leq k\pro{|U_1|>a\sqrt{N}}\leq ka^{-2}N^{-1}\esp{U_1^2}.$ Hence
\begin{equation}
\label{probaSN}
\pro{S_N^\eps>a}\leq \frac{\esp{U_1^2}}{Na^2}\esp{K}\leq 2||f||_\infty\esp{U_1^2}\frac{1}{a}\eps.
\end{equation}

Inserting the bounds~\eqref{probaGN} and~\eqref{probaSN} in~\eqref{probamuN}, we have
$$\pro{\mu^N(\mathcal{B}^\eps_a>b)}\leq C\eps,$$
where $C$ does not depend on $N$ nor $\eps.$ This last inequality is in contradiction with~\eqref{controlPE} since $P(E)$ does not depend on~$\eps.$

\noindent{\it Step 2.} In the following, we note $\partial^2\phi:=\sum_{i,j=1}^2\partial^2_{x^i x^j}\phi.$ For any $ 0 \le s_1 < \ldots < s_k < s < t$, any $\varphi_1,\dots,\varphi_k 
,\psi_1,\hdots,\psi_k\in C_b ( \R)$, any $\varphi \in C^3_b (\R^2)$, we introduce
\begin{multline*}
F(\mu):=\psi_1(\mu_{s_1}(f)) \hdots\psi_k(\mu_{s_k}(f) )\int_{D(\r_+,\r)^2} \mu \otimes \mu (d\gamma)\phi_1(\gamma_{s_1})\hdots\phi_k(\gamma_{s_k})\\
\ll[\phi(\gamma_t)-\phi(\gamma_s)+\alpha\int_s^t\gamma^1_r\partial_{x^1} \varphi (\gamma_r)dr+\alpha\int_s^t\gamma^2_r\partial_{x^2}\phi(\gamma_r)dr-\frac{\sigma^2}{2}\int_s^t \mu_r(f)\partial^2\phi(\gamma_r)dr\rr.\\
\ll.-\int_s^tf(\gamma^1_r)(\phi(0,\gamma^2_r)-\phi(\gamma_r))dr-\int_s^tf(\gamma^2_r)(\phi(\gamma^1_r,0)-\phi(\gamma_r))dr\rr].
\end{multline*}

To show that $\mathcal{L}(\mu)$ is solution of item (ii) of the martingale problem~$(\mathcal{M}),$ by a classical density argument, it is sufficient  to prove that $\esp{F(\mu)}=0.$ We have
\begin{multline*}
F(\mu^N)=\psi_1(\mu^{N}_{s_1}(f))\hdots\psi_k(\mu^{N}_{s_k}(f)) 
\frac{1}{N^2}\sum_{i =1}^{N} \sum_{j=1}^N \phi_1(X^{N,i}_{s_1}, X^{N, j }_{s_1} )\hdots\phi_k(X^{N,i}_{s_k},X^{N,j}_{s_k}) \cdot\\
\ll[\phi(X^{N,i}_t,X^{N,j}_t)-\phi(X^{N,i}_s,X^{N,j}_s)+\alpha\int_s^tX^{N,i}_r\partial_{x^1}\phi(X^{N,i}_r,X^{N,j}_r)dr\rr.\\
+\alpha\int_s^tX^{N,j}_r\partial_{x^2}\phi(X^{N,i}_r,X^{N,j}_r)dr-\frac{\sigma^2}{2}\int_s^t \mu^N_r (f)  \partial^2\phi(X^{N,i}_r,X^{N,j}_r)dr\\
\ll. -\int_s^tf(X^{N,i}_r)(\phi(0,X^{N,j}_r)-\phi(X^{N,i}_r,X^{N,j}_r))dr
-\int_s^tf(X^{N,j}_r)(\phi(X^{N,i}_r,0)-\phi(X^{N,i}_r,X^{N,j}_r))dr\rr].
\end{multline*}
But recalling~\eqref{eq:dyn} and using Ito's formula, for any $ i \neq j,$ we have
\begin{multline*}
\phi(X^{N,i}_t,X^{N,j}_t)\\
=\phi(X^{N,i}_s,X^{N,j}_s)
-\alpha\int_s^tX^{N,i}_r\partial_{x^1}\phi(X^{N,i}_r,X^{N,j}_r)dr-\alpha\int_s^tX^{N,j}_r\partial_{x^2}\phi(X^{N,i}_r,X^{N,j}_r)dr\\
+\int_{]s,t]\times\r_+\times\r}\uno{z\leq f(X^{N,i}_{r-})}\ll[\phi\ll(0,X^{N,j}_{r-}+\frac{u}{\sqrt{N}}\rr)-\phi(X^{N,i}_{r-},X^{N,j}_{r-})\rr]\pi^{i}(dr,dz,du)\\
+\int_{]s,t]\times\r_+\times\r}\uno{z\leq f(X^{N,j}_{r-})}\ll[\phi\ll(X^{N,i}_{r-}+\frac{u}{\sqrt{N}},0\rr)-\phi(X^{N,i}_{r-},X^{N,j}_{r-})\rr]\pi^{j}(dr,dz,du)\\
+\sum_{\substack{k=1\\k\not\in\{i,j\}}}^{N}\int_{]s,t]\times\r_+\times\r}\uno{z\leq f(X^{N,k}_{r-})}\ll[\phi\ll(X^{N,i}_{r-}+\frac{u}{\sqrt{N}},X^{N,j}_{r-}+\frac{u}{\sqrt{N}}\rr)\rr.\\
\ll.-\phi(X^{N,i}_{r-},X^{N,j}_{r-})\rr]\pi^{k}(dr,dz,du).
\end{multline*}
We use the notation $\tilde\pi^j(dr,dz,du)=\pi^j(dr,dz,du)-drdz\nu(du)$ and set
\begin{align*}
M^{N,i,j,1}_{s,t}:=&\int_{]s,t]\times\r_+\times\r}\uno{z\leq f(X^{N,i}_{r-})}\ll[\phi\ll(0,X^{N,j}_{r-}+\frac{u}{\sqrt{N}}\rr)-\phi(X^{N,i}_{r-},X^{N,j}_{r-})\rr]\tilde\pi^{i}(dr,dz,du),\\
M^{N,i,j,2}_{s,t}:=&\int_{]s,t]\times\r_+\times\r}\uno{z\leq f(X^{N,j}_{r-})}\ll[\phi\ll(X^{N,i}_{r-}+\frac{u}{\sqrt{N}},0\rr)-\phi(X^{N,i}_{r-},X^{N,j}_{r-})\rr]\tilde\pi^{j}(dr,dz,du),\\
W^{N,i,j}_{s,t}:=&\sum_{\substack{k=1\\j\not\in\{i,j\}}}^{N}\int_{]s,t]\times\r_+\times\r}\uno{z\leq f(X^{N,k}_{r-})}\ll[\phi\ll(X^{N,i}_{r-}+\frac{u}{\sqrt{N}},X^{N,j}_{r-}+\frac{u}{\sqrt{N}}\rr)\rr.\\
&\hspace*{8cm}\ll.-\phi(X^{N,i}_{r-},X^{N,j}_{r-})\rr]\tilde\pi^{k}(dr,dz,du),\\
\Delta^{N,i,j,1}_{s,t}:=&\int_s^t\int_\r f(X^{N,i}_r)\ll[\phi\ll(0,X^{N,j}_r+\frac{u}{\sqrt{N}}\rr)-\phi(0,X^{N,j}_r)\rr]d\nu(u)dr,\\
\Delta^{N,i,j,2}_{s,t}:=&\int_s^t\int_\r f(X^{N,j}_r)\ll[\phi\ll(X^{N,i}_r+\frac{u}{\sqrt{N}},0\rr)-\phi(X^{N,i}_r,0)\rr]d\nu(u)dr,\\
\Gamma^{N,i,j}_{s,t}:=&\sum_{\substack{k=1\\k\not\in\{i,j\}}}^{N}\int_s^t\int_\r f(X^{N,k}_r)\ll[\phi\ll(X^{N,i}_r+\frac{u}{\sqrt{N}},X^{N,j}_r+\frac{u}{\sqrt{N}}\rr)-\phi(X^{N,i}_r,X^{N,j}_r)\rr.\\
&\ll.-\frac{u}{\sqrt{N}}\partial_{x^1}\phi(X^{N,i}_r,X^{N,j}_r)-\frac{u}{\sqrt{N}}\partial_{x^2}\phi(X^{N,i}_r,X^{N,j}_r)\rr]d\nu(u)dr \\
&-\int_s^t\int_\r\frac{u^2}{2}\partial^2\phi(X^{N,i}_r,X^{N,j}_r)\frac1N\sum_{\substack{k=1\\k\not\in\{i,j\}}}^{N}f(X^{N,k}_r)d\nu(u)dr,\\
R^{N,i,j}_{s,t}:=&\frac{\sigma^2}{2}\int_s^t\partial^2\phi(X^{N,i}_r,X^{N,j}_r)\ll(\frac1N\sum_{\substack{k=1\\k\not\in\{i,j\}}}^{N}f(X^{N,k}_r)-\frac{1}{N}\sum_{k=1}^{N}f(X^{N,k}_r)\rr)dr.
\end{align*}
Finally, for $ i = j , $ we have
\begin{multline*}
\phi(X^{N,i}_t,X^{N,i}_t)=\phi(X^{N,i}_s,X^{N,i}_s)\\
-\alpha\int_s^tX^{N,i}_r\partial_{x^1}\phi(X^{N,i}_r,X^{N,i}_r)dr-\alpha\int_s^tX^{N,i}_r\partial_{x^2}\phi(X^{N,i}_r,X^{N,i}_r)dr\\
+\int_{]s,t]\times\r_+\times\r}\uno{z\leq f(X^{N, i}_{r-})}\ll[\phi\ll(0,0 \rr)-\phi(X^{N,i}_{r-},X^{N,i}_{r-})\rr]\pi^{i}(dr,dz,du)\\
+\sum_{\substack{k=1\\k\neq i}}^{N}\int_{]s,t]\times\r_+\times\r}\uno{z\leq f(X^{N,k}_{r-})}\ll[\phi\ll(X^{N,i}_{r-}+\frac{u}{\sqrt{N}},X^{N,i}_{r-}+\frac{u}{\sqrt{N}}\rr)
-\phi(X^{N,i}_{r-},X^{N,i}_{r-})\rr]\pi^{k}(dr,dz,du).
\end{multline*}
The associated martingales and error terms are given by 
\begin{align*}
M^{N, i }_{s,t}:= & \int_{]s,t]\times\r_+\times\r}\uno{z\leq f(X^{N, i}_{r-})}\ll[\phi\ll(0,0 \rr)-\phi(X^{N,i}_{r-},X^{N,i}_{r-})\rr]\tilde \pi^{i}(dr,dz,du),\\
W^{N, i }_{s,t} := & \sum_{\substack{k=1\\k\neq i}}^{N}\int_{]s,t]\times\r_+\times\r}\uno{z\leq f(X^{N,k}_{r-})}\ll[\phi\ll(X^{N,i}_{r-}+\frac{u}{\sqrt{N}},X^{N,i}_{r-}+\frac{u}{\sqrt{N}}\rr) \rr.\\
&\hspace*{8cm}  \ll. -\phi(X^{N,i}_{r-},X^{N,i}_{r-})\rr]\tilde \pi^{k}(dr,dz,du),\\
\Delta^{N,i}_{s,t}:=&\int_s^t\int_\r f(X^{N,i}_r)\ll[\phi\ll(0,0 \rr)-\phi(0,X^{N,i}_r) - \phi ( X^{N, i }_r, 0) + \phi (X^{N, i }_r, X^{N, i }_r)  \rr]d\nu(u)dr,\\
\Gamma^{N,i}_{s,t}:=&\sum_{\substack{k=1\\k\neq i }}^{N}\int_s^t\int_\r f(X^{N,k}_r)\ll[\phi\ll(X^{N,i}_r+\frac{u}{\sqrt{N}},X^{N,i}_r+\frac{u}{\sqrt{N}}\rr)-\phi(X^{N,i}_r,X^{N,i}_r)\rr.\\
&\ll.-\frac{u}{\sqrt{N}}\partial_{x^1}\phi(X^{N,i}_r,X^{N,i}_r)-\frac{u}{\sqrt{N}}\partial_{x^2}\phi(X^{N,i}_r,X^{N,i}_r)\rr]d\nu(u)dr \\
&-\int_s^t\int_\r\frac{u^2}{2}\partial^2\phi(X^{N,i}_r,X^{N,i}_r)\frac1N\sum_{\substack{k=1\\k\neq i }}^{N}f(X^{N,k}_r)d\nu(u)dr,\\
R^{N,i}_{s,t}:=&\frac{\sigma^2}{2}\int_s^t\partial^2\phi(X^{N,i}_r,X^{N,i}_r)\ll(\frac1N\sum_{\substack{k=1\\k\neq i }}^{N}f(X^{N,k}_r)-\frac{1}{N}\sum_{k=1}^{N}f(X^{N,k}_r)\rr)dr.
\end{align*}
Then we obtain, since $\int_\r ud\nu(u)=0$,  that
\begin{multline*}
F(\mu^N)=\psi_1(\mu^{N}_{s_1}(f))\hdots\psi_k(\mu^{N}_{s_k}(f)) 
\frac{1}{N^2}\sum_{i, j =1, i \neq j }^{N}\phi_1(X^{N,i}_{s_1},X^{N,j}_{s_1})\hdots\phi_k(X^{N,i}_{s_k},X^{N,j}_{s_k})\\
\ll[M^{N,i,j,1}_{s,t}+M^{N,i,j,2}_{s,t} +W^{N,i,j}_{s,t} +\Delta^{N,i,j,1}_{s,t}+\Delta^{N,i,j,2}_{s,t}+\Gamma^{N,i,j}_{s,t}+R^{N,i,j}_{s,t}\rr]\\
+\psi_1(\mu^{N}_{s_1}(f))\hdots\psi_k(\mu^{N}_{s_k}(f)) 
\frac{1}{N^2}\sum_{i=1 }^{N}\phi_1(X^{N,i}_{s_1},X^{N,i}_{s_1})\hdots\phi_k(X^{N,i}_{s_k},X^{N,i}_{s_k})\\
\ll[M^{N,i}_{s,t}+W^{N,i}_{s,t}
+\Delta^{N,i}_{s,t}+\Gamma^{N,i}_{s,t}+R^{N,i}_{s,t}\rr].
\end{multline*}

Using exchangeability and the boundedness of the $\phi_j,\psi_j$ ($1\leq j\leq k$) and the fact that $M^{N,i,j,1}, \ldots , $  $W^{N,i}$ are martingales, this implies
\[
|\esp{F(\mu^N)}|\leq C\esp{|\Delta^{N,i,j,1}_{s,t}|+|\Delta^{N,i,j,2}_{s,t}|+|\Gamma^{N,i,j}_{s,t}|+|R^{N,i,j}_{s,t}| +\frac{ |\Delta^{N,i}_{s,t}|+|\Gamma^{N,i}_{s,t}|+|R^{N,i}_{s,t}|}{N}}.
\]
Since $f$ is bounded and $\phi\in C^3_b(\r^2),$ Taylor-Lagrange's inequality implies then that
$$|\esp{F(\mu^N)}|\leq \frac{C}{\sqrt{N}}.$$
Finally, using that $F$ is bounded and almost surely continuous at $\mu$ (see {\it Step~1}), we have
$$\esp{F(\mu)}=\underset{N\rightarrow\infty}{\lim}\esp{F(\mu^N)}=0,$$
concluding our proof.
\end{proof}

Now we have all elements to give the proof of the following main result. 
\begin{thm}\label{uniquelimit}
Grant Assumptions \ref{ass:1}, \ref{control} and \ref{ass:2}. Each converging subsequence of $\mu^N:=\frac1N\sum_{j=1}^N\delta_{X^{N,j}}$ converges to the same limit $ \mu = {\mathcal L} ( \bar X | {\mathcal W}) , $ where $\bar X $ is the unique strong solution of \eqref{eq:dynlimit}.
\end{thm}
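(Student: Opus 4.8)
The plan is: tightness reduces everything to identifying the subsequential limits; Theorem~\ref{convergencemartingale} shows each such limit $\mu$ has distribution solving the martingale problem $(\mathcal M)$; Lemma~\ref{representation} then gives an SDE representation of two tagged coordinates; and, finally, a quantitative $L^1$-coupling with the limit system~\eqref{eq:dynlimit1}, routed through an auxiliary finite mean-field particle system, forces the directing measure to be $\mathcal L(\bar X\mid\mathcal W)$. Concretely, by Proposition~\ref{mutight} the sequence $(\mu^N)$ is tight, so let $\mu$ be the weak limit of a subsequence $\mu^{N_k}$. First I would check item~(i) of $(\mathcal M)$: since the $X^{N,i}_0$ are i.i.d.\ $\nu_0$, the law of large numbers gives $\mu^N_0\to\nu_0$ a.s., and $m\mapsto m_0$ is continuous for the Skorokhod topology, so $\mu_0=\nu_0$ a.s.; item~(ii) is Theorem~\ref{convergencemartingale}. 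Hence $Q:=\mathcal L(\mu)$ solves $(\mathcal M)$.

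Next, I would apply Lemma~\ref{representation} to $Q$: on an extension of the canonical space there are a standard Brownian motion $W$ and independent Poisson random measures $\pi^1,\pi^2$, independent of $W$, with
\[
dY^i_t=-\alpha Y^i_t\,dt+\sigma\sqrt{\mu_t(f)}\,dW_t-Y^i_{t-}\int_{\R_+}\un_{\{z\le f(Y^i_{t-})\}}\,\pi^i(dt,dz),\qquad i=1,2.
\]
I would then enlarge the space further to carry i.i.d.\ Poisson measures $(\pi^i)_{i\ge 1}$ (extending $\pi^1,\pi^2$) and i.i.d.\ $\nu_0$-initial data, and define $(Y^i)_{i\ge 1}$ as the solutions of the same equation driven by $W$ and $\pi^i$. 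Pathwise uniqueness here follows exactly as in Theorem~\ref{prop:42} --- in fact more easily, since $\sqrt{\mu_\cdot(f)}$ is now an exogenous progressively measurable process --- so this is consistent with the $Y^1,Y^2$ of Lemma~\ref{representation}, and by de Finetti $(Y^i)_{i\ge 1}$ is, conditionally on $\mu$, i.i.d.\ with law $\mu$; in particular $\mu_t(f)=\lim_n\tfrac1n\sum_{i\le n}f(Y^i_t)$ a.s. Making this construction while preserving the required independence between $W$, the $\pi^i$ and the initial data is a delicate but routine bookkeeping point.

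For the identification step, for each $n$ introduce the finite mean-field particle system $(\widetilde X^{i,n})_{1\le i\le n}$,
\[
d\widetilde X^{i,n}_t=-\alpha\widetilde X^{i,n}_t\,dt+\sigma\sqrt{\tfrac1n\sum_{j=1}^n f(\widetilde X^{j,n}_t)}\,dW_t-\widetilde X^{i,n}_{t-}\int_{\R_+}\un_{\{z\le f(\widetilde X^{i,n}_{t-})\}}\,\pi^i(dt,dz),\quad \widetilde X^{i,n}_0=Y^i_0,
\]
well posed under Assumption~\ref{ass:2} as in Theorem~\ref{prop:42}. Working with the distance $d(x,y)=|a(x)-a(y)|$ (a genuine metric, $a$ being strictly increasing), Itô's formula, the Burkholder--Davis--Gundy inequality, the bounds of Assumption~\ref{ass:2}, $\inf f>0$, and exchangeability to close the interaction terms by Gronwall's lemma, one should obtain $\E\,|a(Y^1_t)-a(\widetilde X^{1,n}_t)|\le C_t\sup_{s\le t}\E\,|\mu_s(f)-\tfrac1n\sum_{j\le n}f(Y^j_s)|$, whose right-hand side is an i.i.d.\ fluctuation of order $n^{-1/2}$ conditionally on $\mu$ and hence tends to $0$. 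An entirely analogous (classical) propagation-of-chaos estimate compares $(\widetilde X^{i,n})$ with the infinite limit system $(\bar X^i)_{i\ge 1}$ of~\eqref{eq:dynlimit1} built with the same $W$, the same $\pi^i$ and $\bar X^i_0=Y^i_0$, and yields $\E\,|a(\widetilde X^{1,n}_t)-a(\bar X^1_t)|\to 0$, the forcing term now being $\E\,|\tfrac1n\sum_{j\le n}f(\bar X^j_s)-\E[f(\bar X^1_s)\mid\mathcal W]|\to 0$ by the conditional law of large numbers from Proposition~\ref{independence}.

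Combining the two comparisons and letting $n\to\infty$ (the left-hand side not depending on $n$) gives $\E\,|a(Y^1_t)-a(\bar X^1_t)|=0$ for all $t$, hence $Y^1=\bar X^1$ a.s.\ by strict monotonicity of $a$ and right-continuity, and likewise $Y^i=\bar X^i$ for every $i$. Therefore
\[
\mu=\lim_n\tfrac1n\sum_{i\le n}\delta_{Y^i}=\lim_n\tfrac1n\sum_{i\le n}\delta_{\bar X^i}=\mathcal L(\bar X^1\mid\mathcal W)\quad\text{a.s.}
\]
by Proposition~\ref{independence}(iii); in particular $\mu_t(f)=\E[f(\bar X^1_t)\mid\mathcal W]$, so $\bar X^1$ solves~\eqref{eq:dynlimit} and, by Theorem~\ref{prop:42}, is its unique strong solution. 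Since this does not depend on the chosen subsequence, $\mu^N$ converges in distribution to $\mathcal L(\bar X\mid\mathcal W)$. I expect the main obstacle to be the $L^1$-comparison of the third paragraph: the non-Lipschitz term $x f(x)$ rules out a Wasserstein coupling and forces the distance $d(x,y)=|a(x)-a(y)|$, the square-root volatility must be handled via $\inf f>0$, and one has to track the independence structure carefully when passing to the infinite exchangeable extension so that pathwise uniqueness and the conditional i.i.d.\ property are genuinely available.
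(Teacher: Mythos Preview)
Your overall strategy mirrors the paper's closely: tightness, the martingale problem, the SDE representation from Lemma~\ref{representation}, and then an $L^1$-comparison in the metric $|a(\cdot)-a(\cdot)|$ routed through an auxiliary finite mean-field system. The technical ingredients you single out (the $a$-distance to handle the non-Lipschitz term $xf(x)$, $\inf f>0$ for the square-root, BDG plus Gronwall) are exactly the ones the paper uses, and your two-sided comparison $Y\leftrightarrow\widetilde X^{\,n}\leftrightarrow\bar X$ is precisely the estimate~\eqref{auxlim} in the paper.

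There is, however, a genuine gap in your passage from two tagged particles to an infinite sequence. You take $(Y^1,Y^2)$ from Lemma~\ref{representation} with driving noises $W,\pi^1,\pi^2$, then \emph{adjoin} independent $\pi^i$ and $Y^i_0$ for $i\ge3$ and define $Y^i$ as solutions of the SDE with exogenous volatility $\sqrt{\mu_\cdot(f)}$. You then assert that ``by de~Finetti $(Y^i)_{i\ge1}$ is, conditionally on $\mu$, i.i.d.\ with law $\mu$'' and hence $\mu_t(f)=\lim_n n^{-1}\sum_{i\le n}f(Y^i_t)$. This is not justified. Your constructed sequence is at best conditionally i.i.d.\ given $(W,\mu)$, so its empirical measure converges to $\mathcal L(Y^1\mid W,\mu)$, which is not a priori equal to $\mu=\mathcal L(Y^1\mid\mu)$. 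Put differently, the $\pi^1,\pi^2$ produced by the representation theorem are correlated with $\mu$ in a specific way (they drive paths whose conditional law given $\mu$ \emph{is} $\mu$), whereas your $\pi^i$ for $i\ge3$ are independent of $\mu$ by fiat; there is no reason the glued sequence is exchangeable with directing measure $\mu$. Since your forcing term in the first comparison is exactly $\sup_{s\le t}\E\big|\mu_s(f)-n^{-1}\sum_{j\le n}f(Y^j_s)\big|$, the argument does not close. This is not ``routine bookkeeping'': showing that conditioning on $W$ in addition to $\mu$ does not refine the law of $Y^1$ is essentially the identification you are trying to prove.

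The paper sidesteps this by invoking Aldous' Proposition~7.20 at the outset to realise $\mu$ as the directing measure of a \emph{genuine} infinite exchangeable limit $(\bar Y^i)_{i\ge1}$ of $(X^{N,i})$. Then $\mu_t(f)=\lim_N N^{-1}\sum_j f(\bar Y^j_t)$ and the conditional i.i.d.\ structure given $\mu$ are immediate from de~Finetti/Hewitt--Savage, with no artificial extension needed. Lemma~\ref{representation} is applied to every pair $(\bar Y^i,\bar Y^j)$, and exchangeability of the full sequence is what forces a single common $W$ and jointly independent $(\pi^i)_{i\ge1}$. If you replace your extension step by this Aldous step, the remainder of your proof goes through essentially verbatim and coincides with the paper's.
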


\begin{proof}
Let us consider the limit (in distribution) $\mu$ of a subsequence of $\mu^N$.  By Proposition~(7.20) of \cite{aldous_exchangeability_1983}, $\mu$ is the directing measure of some exchangeable system $(\bar Y^i)_{i\geq 1},$ and it holds that, for the chosen subsequence, 
$(X^{N,i})_{1\leq i\leq N}$ converges in law to $(\bar Y^i)_{i\geq 1}.$ Moreover, we also know that 
$$\mu=\mathcal{L}(\bar Y^i| \mu) \mbox{ and  } \mu \otimes \mu = \mathcal{L}((\bar Y^i, \bar Y^j) | \mu) , $$
almost surely, for all $ i \neq j . $ In particular, for all $ i \neq j,$ 
$$ \mathcal {L} ( \mu, (\bar Y^i , \bar Y^j ) ) = P ,$$ 
where $ P$ is given by \eqref{eq:P}, with $ Q = \mathcal{L} ( \mu ) .$ 

Thanks to Lemma~\ref{representation}, together with Theorem~\ref{convergencemartingale}, we know that there exist Brownian motions $W^{(i,j)}$ ($i,j \geq 1$) and Poisson random measures $\pi^i$ ($i\geq 1$) such that for all pairs  $(i,j) , i \neq j ,$  $\pi^{i}$ is independent of $\pi^{j}$ and such that
\begin{align*}
d\bar Y^{i}_t=&-\alpha\bar Y^{i}_tdt+\sigma\sqrt{\mu_t(f)}dW^{(i,j)}_t-\bar Y^{i}_{t-}\int_{\r_+}\uno{z\leq f(\bar Y^{i}_{t-})}\pi^{i}(dt,dz),\\
d\bar Y^{j}_t=&-\alpha\bar Y^{j}_tdt+\sigma\sqrt{\mu_t(f)}dW^{(i,j)}_t-\bar Y^{j}_{t-}\int_{\r_+}\uno{z\leq f(\bar Y^{j}_{t-})}\pi^{j}(dt,dz).
\end{align*}
The exchangeability of the system $(\bar Y^i)_{i\geq 1}$ implies the independence of the $(\pi^i)_{i\geq 1}$ and that for all $i,j,k \geq 1,$ $ i \neq j,$ $i \neq k, $  $W^{(i,j) }=W^{(i, k )}=W.$

The last point to prove is that $\mu_t(f):=\espc{f(\bar Y^1_t)}{\mu}=\espc{f(\bar Y^1_t)}{\W}~a.s..$ This would be a consequence of the fact that, conditionally on $W,$ the processes $(\bar Y^j)_{j\geq 1}$ are $i.i.d.$ (see Lemma~(2.12).(a) of \cite{aldous_exchangeability_1983}). But this last assertion is not trivial because we do not know yet that $W$ is the only noise  term common to each process $\bar Y^j$ ($j\geq 1$). That is why we will introduce an auxiliary particle system which is a mean field version of the limit system and which converges to $(\bar Y^j)_{j\geq 1}.$

To begin with, Lemma~(2.15) of \cite{aldous_exchangeability_1983} implies that $\mu_t(f)$ is the almost sure limit of $N^{-1}\sum_{j=1}^Nf(\bar Y^j_t).$ Now, let us prove that this sequence converges to $\espc{f(\bar Y^1_t)}{\W}.$ For this purpose, we introduce the system $(\auxY^{N,i})_{1\leq i\leq N},$ driven by the same Brownian motion $ W$ and the same Poisson random measures $\pi^i,$ with $\bar Y^i_0=\tilde X^{N,i}_0$ ($i\geq 1$), replacing the term $\mu_t(f)$ by the empirical measure:
$$d\auxY_t^{N,i}=-\alpha\auxY^{N,i}_tdt+\sqrt{\frac1N\sum_{j=1}^Nf(\auxY^{N,j}_t)}dW_t-\auxY^{N,i}_{t-}\int_{\r_+}\uno{z\leq f(\auxY^{N,i}_{t-})}\pi^i(dt,dz), \; \auxY_0^{N,i} = \bar Y_0^i .$$
Consider finally the system $(\bar X^i)_{i\geq 1}$ defined in \eqref{eq:dynlimit1}, driven by the same Brownian motion $ W$ and the same Poisson random measures $\pi^i $ as $(\bar Y^i)_{i\geq 1},$ with $ \bar X^i_0 = \bar Y^i_0 $ for all $ i \geq 1.$  In this way, $ (\bar X^i)_{i \geq 1} , (\bar Y^i)_{i \geq 1} $ and $ (\tilde X^{N, i })_{1 \le i \le N} $ are all defined on the same probability space.

It is now sufficient to prove that both for $(\bar Y^i)_{i\geq 1}$ and for  $(\bar X^i)_{i\geq 1},$
\begin{equation}
\label{auxlim}
\esp{\ll|a(\bar Y^i_t)-a(\auxY^{N,i}_t)\rr|} + \esp{\ll|a(\bar X^i_t)-a(\auxY^{N,i}_t)\rr|}\leq C_t N^{-1/2}.
\end{equation}
Indeed, suppose we have already proven the above control \eqref{auxlim}. Then 
\begin{multline*}
\esp{\ll|\frac1N\sum_{j=1}^Nf(\bar Y^j_t)-\espc{f(\bar X^1_t)}{\W}\rr|}\leq \frac1N\sum_{j=1}^N\esp{|f(\bar Y^j_t)-f(\auxY^{N,j}_t)|}\\
+\frac1N\sum_{j=1}^N\esp{|f(\auxY^{N,j}_t) - f( \bar X_t^j ) |}+\esp{\ll|\frac1N\sum_{j=1}^Nf(\bar X^{j}_t)-\espc{f(\bar X^{1}_t)}{\W}\rr|}
.
\end{multline*}
Then, \eqref{auxlim} and Assumption~\ref{ass:2} imply that the first term and the second one of the sum above are smaller than $C_tN^{-1/2}$ for some $C_t>0.$ In addition, by item (ii) of Proposition \ref{independence}, the variables $  (\bar X^j)_{ 1 \le j \le N } $ are i.i.d., conditionally on $W.$ Consequently, the third term is also smaller than $C_tN^{-1/2}.$

This implies that 
$$\mu_t ( f) = \espc{f(\bar Y^1_t)}{\mu}=\espc{f(\bar X^1_t)}{\W}= \espc{f(\bar X^i_t)}{\W} \mbox{ a.s..}$$
As a consequence, $(\bar Y^i)_{i\geq 1}$ is solution of the infinite system 
$$ d\bar Y^{i}_t=-\alpha\bar Y^{i}_tdt+\sigma\sqrt{\espc{f(\bar X^i_t)}{\W}}dW_t-\bar Y^{i}_{t-}\int_{\r_+}\uno{z\leq f(\bar Y^{i}_{t-})}\pi^{i}(dt,dz),$$
while $(\bar X^i)_{i\geq 1}$ in \eqref{eq:dynlimit1} is solution of 
$$ d\bar X^{i}_t=-\alpha\bar X^{i}_tdt+\sigma\sqrt{\espc{f(\bar X^i_t)}{\W}}dW_t-\bar X^{i}_{t-}\int_{\r_+}\uno{z\leq f(\bar X^{i}_{t-})}\pi^{i}(dt,dz),$$
with $ \bar X^i_0 = \bar Y^i _0,$ for all $ i \geq 1.$

Let us prove that $\bar X^i=\bar Y^i$ almost surely. For that sake, consider $\tau_M=\inf\{t>0:|\bar X^i_t|\wedge|\bar Y^i_t|>M\}$ for $M>0.$ We prove that $\esp{|\bar X^i_{t\wedge\tau_M}-\bar Y^i_{t\wedge\tau_M}|}=0$ for all $M>0,$ which implies, by Fatou's lemma, that $\esp{|\bar X^i_{t}-\bar Y^i_{t}|}=0$, recalling~\eqref{controllimite}, and the fact that we can prove a similar control for $\bar Y^i$. Let $u_M(t):=\esp{|\bar X^i_{t\wedge\tau_M}-\bar Y^i_{t\wedge\tau_M}|}$. To see that $u_M(t)=0$, it is sufficient to apply Gr\"onwall's lemma to the following inequality
$$
u_M(t)\leq \alpha\int_0^tu_M(s)ds+\esp{\int_{[0,t\wedge\tau_M]\times\r_+}\ll|\bar X^i_{s-}\uno{z\leq f(\bar X^i_{s-})}-\bar Y^i_{s-}\uno{z\leq f(\bar Y^i_{s-})}\rr|\pi^i(ds,dz)}
$$ 
implying that 
\begin{multline*}
u_M(t) \leq \alpha\int_0^tu_M(s)ds+\esp{\int_{[0,t\wedge\tau_M]\times \R_+ } \indiq_{ z \in [0,f(\bar X^i_{s-})\wedge f(\bar Y^i_{s-})]} \ll|\bar X^i_{s-}-\bar Y^i_{s-}\rr|\pi^i(ds,dz)}\\
+\esp{\int_{[0,t\wedge\tau_M]\times \R_+} \indiq_{ z \in ]f(\bar X^i_{s-})\wedge f(\bar Y^i_{s-}), f(\bar X^i_{s-})\vee f(\bar Y^i_{s-})]}  |\bar X^i_{s-}|\vee|\bar Y^i_{s-}|\pi^i(ds,dz)},
\end{multline*}
whence 
$$ 
u_M(t)\leq C(1+M)\int_0^tu_M(s)ds.
$$

Hence $(\bar Y^i)_{i\geq 1}$ is solution of the infinite system \eqref{eq:dynlimit1} and $ \mu =\mathcal{L}(\bar Y^1|\W),$ its directing measure, is uniquely determined. As a consequence,  $\mu^N$ converges in distribution to $\mathcal{L}(\bar Y^1|\W)$ in $\mathcal{P}(D(\r_+,\r)).$

Let us now show \eqref{auxlim}. We only prove it for $ \bar Y^i, $ the proof for $ \bar X^i$ is similar. We decompose the evolution of $a(\bar Y^{1}_t)$ in the following way. 
\begin{align}
a(\bar Y^{1}_t) &=  a(\bar Y^{1}_0)  -\alpha  \int_0^t  a'(\bar Y^{1}_s)\bar Y^{1}_s  ds+ \int_{[0,t]\times\r_+} \ll(a(0)-a(\bar Y^{1}_{s-})\rr)  \indiq_{ \{ z \le  f ( \bar Y^{1}_{s-}) \}} \bN^1 (ds,dz ) \label{itobarY}\\
&+\frac{\sigma^2}{2}\int_0^ta''(\bar Y^{1}_s)\frac{1}{N}\sum_{j=1}^Nf(\bar Y^{j}_s)ds - B_t^N +\sigma \int_0^t a'(\bar Y^{1}_s)\sqrt{ \frac1N \sum_{j=1}^N f ( \bar Y_s^{j} )  } d W_{s} -M_t^N, 
\nonumber
\end{align}
where
$$B_t^N=\frac{\sigma^2}{2}\int_0^ta''(\bar Y^{1}_s)\ll(\frac{1}{N}\sum_{j=1}^Nf(\bar Y^{j}_s)-\espc{f(\bar Y^{1}_s)}{\mu}\rr)ds$$
and
$$ M_t^N = \sigma \int_0^t a'(\bar Y^{1}_s)\left( \sqrt{ \frac1N \sum_{j=1}^N  f ( \bar Y_s^{j} )  } -  \sqrt{\esp{ f ( \bar Y_s^{1} )  | \mu} }\right)  d W_s.$$
Since
$$ <M^N>_t \leq \sigma^2 \ll(\underset{x\in\r}{\sup}|a'(x)^2|\rr) \int_0^t \left( \sqrt{ \frac1N \sum_{j=1}^N  f ( \bar Y_s^{j} )  } -  \sqrt{\esp{  f ( \bar Y_s^{1} )  | \mu} }\right)^2 ds, $$
recalling that the variables $\bar Y_s^{j}$ ($1\leq j\leq N$) are i.i.d. conditionally to $\mu$, taking conditional expectation $\E ( \cdot | \mu ) $ implies that 
$$ \esp{<M^N>_t} \le C_t N^{-1}\textrm{ and }\esp{B_t^N} \le C_t N^{-1}.$$
Then, applying Ito's formula on $\tilde X^{N,1}$, we obtain the same equation as~\eqref{itobarY}, but without the terms $B^N_t$ and $M^N_t.$ Introducing
$$u(t):=\underset{0\leq s\leq t}{\sup}\esp{\ll|a(\bar Y^1_s)-a(\tilde X^{N,1}_s)\rr|},$$
we can prove with the same reasoning as in the proof of Theorem~\ref{prop:42} that
$$u(t)\leq C(1+t)u(t)+\frac{C_t}{\sqrt{N}},$$
where $C$ and $C_t$ are independent of~$N$. Finally, using the arguments of the proof of Theorem~\ref{prop:42}, this implies~\eqref{auxlim}.
\end{proof}

Let us end this section with the
\begin{proof}[Proof of Theorem~\ref{convergencemuN}]
According to Proposition~\ref{mutight}, the sequence $(\mu^N)_N$ is tight. Besides, thanks to Theorem~\ref{uniquelimit}, every converging subsequence of $(\mu^N)_N$ converges to the same limit $\mu=\mathcal{L}(\bar X^1|\W)$, where $(\bar X^j)_{j\geq 1}$ is solution of \eqref{eq:dynlimit1}. This implies the result.
\end{proof}

\section{Appendix}

\subsection{Well-posedness of the limit equation~\eqref{eq:dynlimit}}\label{sec:wellposed}

\begin{proof}[Proof of Item 2. of Theorem \ref{prop:42}]
The proof is done using a classical Picard-iteration. For that sake we introduce the sequence of processes $ \limY_t^{[0] } \equiv \limY_0 , $ and 
$$  \limY^{[n+1]}_t := \limY_0 -\alpha \int_0^t \limY_s^{[n]} ds - \int_{[0,t]\times\r_+\times\r} \limY^{[n+1]}_{s- } \indiq_{ \{ z \le  f ( \limY^{[n]}_{s-}) \}} \bN (ds,dz,du) + \sigma \int_0^t \sqrt{ \mu^{n}_s ( f) } d W_s ,$$
where 
$$ \mu_s^n = P ( \limY_s^{[n]} \in \cdot | \W ) .$$ 
Let us first prove a control on the moments of $\limY^{[n]}$ uniformly in $n$. Applying Ito's formula we have
\begin{multline*}
\esp{\ll(\limY_{t}^{[n+1]}\rr)^2}\leq \esp{\limY_0^2}-2\alpha\int_0^t\esp{\ll(\limY_{s}^{[n+1]}\rr)^2}ds+\sigma^2\int_0^t\esp{\mu^n_{s}(f)}ds\\
\leq \esp{\limY_0^2}+\sigma^2\int_0^t\esp{\mu^n_{s}(f)}ds.
\end{multline*}
Using that $f$ is bounded, 
$$\esp{\ll(\limY_{t}^{[n+1]}\rr)^2}\leq \esp{\limY_0^2}+\sigma^2||f||_\infty t,$$
implying
\begin{equation}
\label{controlY[n]}
\underset{n\in\n}{\sup}~\underset{0\leq s\leq t}{\sup}\esp{\ll(\limY_{s}^{[n]}\rr)^2}<+\infty.
\end{equation}
Now, we prove the convergence of $\limY^{[n]}_t$. The same strategy as the one of the proof of Item 1. of Theorem~\ref{prop:42} allows to show that 
$$\delta_t^n := \E \sup_{s \le t } | a ( \limY_s^{[n]} ) - a( \limY_s^{[n-1]} ) | \; \mbox{  satisfies } \; 
  \delta_t^n \le C (t + \sqrt{t} )  \delta_t^{n-1}   ,$$
for all $ n \geq 1 , $ for a constant $C$ only depending on the parameters of the model, but not on $ n, $ neither on $t. $ Choose $t_1 $ such that 
$$  C (t_1  + \sqrt{t_1} )  \le \frac12.$$
Since $ \sup_{s \le t_1 } | a ( \limY_s^{[0]} ) | = a ( \limY_0) \le \| a \|_\infty, $ we deduce from this
that
$$ \delta_{t_1}^n \le 2^{- n } \| a \|_\infty .$$
This implies the almost sure convergence of $a\ll(\limY_t^{[n]}\rr)_n$ to some random variable $Z_t$ for all $t\in[0,t_1]$. As $a$ is an increasing function, the almost sure convergence of $\limY_t^{[n]}$ to some (possibly infinite) random variable $\limY_t$ follows from this. The almost sure finiteness of $\limY_t$ is then guaranteed by Fatou's lemma and~\eqref{controlY[n]}.

Now let us prove that $\limY$ is solution of the limit equation~\eqref{eq:dynlimit} which follows by standard arguments (note that the jump term does not cause troubles because it is of finite activity).  
The most important point is to notice that
$$ \mu_t^n ( f) = \E ( f ( \limY_t^{[n]}) | \W) \to \E ( f (\limY_t) | \W ) $$
almost surely, which follows from the almost sure convergence of $ f ( \limY_t^{[n]} ) \to f (\limY_t ) ,$ using dominated convergence. 

Once the convergence is proven on the time interval $ [0, t_1 ], $ we can proceed iteratively over successive intervals $ [ k t_1, (k+1) t_1] $ to conclude that $\bar X$ is solution of~\eqref{eq:dynlimit} on~$\r_+.$

It remains to prove~\eqref{controllimite}. Firstly, by Fatou's lemma and~\eqref{controlY[n]}, we know that, for all $t>0,$
\begin{equation}
\label{controllimite2}
\underset{0\leq s\leq t}{\sup}\esp{\bar X_s^2}<\infty.
\end{equation}

Besides, Ito's formula gives
\begin{multline*}
\bar X_t^2 = \bar X_0^2 -2\alpha\int_0^t\bar X_s^2ds-\int_{[0,t]\times\r_+\times\r}\bar X_{s-}^2\indiq_{ \{ z \le  f ( \bar X_{s-}) \}} \pi(ds,dz,du)\\
+\sigma^2\int_0^t\mu_s(f)ds+2\sigma\int_0^t\sqrt{\mu_s(f)}\bar X_sdW_s\\
\leq \bar X_0^2+\sigma^2||f||_\infty t+2\sigma\int_0^t\sqrt{\mu_s(f)}\bar X_sdW_s.
\end{multline*}
Inequality~\eqref{controllimite} is then a straightforward consequence of Burkholder-Davis-Gundy inequality,~\eqref{controllimite2} and the above computation.
\end{proof}

We now give the 
\begin{proof}[Proof of Proposition~\ref{independence}]
$(i)$ Given a Brownian motion $W$ and i.i.d. Poisson measures $\pi^i ,$ the same proof as the one of  Theorem~\ref{prop:42} implies the existence and the uniqueness of the system given in~\eqref{eq:dynlimit1} for $1\leq i\leq N.$ 

$(ii)$ The construction of the proof of Item 2. of Theorem~\ref{prop:42}, together with the proof of Theorem~1.1 of Chapter IV.1 and of Theorem 9.1 in Chapter IV.9 of \cite{ikeda_stochastic_1989}, imply the existence of a measurable function $\Phi$ that does not depend on $k=1,\ldots, N$, and that satisfies, for each $1\leq k\leq N,$
$$\limY^k=\Phi(\limY^k_0,\pi^k,W)$$
and for all $ t \geq 0, $ 
\begin{equation}\label{eq:nonanticipatif}
 \limY^k_{|[0,t]} = \Phi_t (\limY^k_0, \pi^k_{| [ 0, t ] \times \R_+ \times \R } ,(W_s)_{ s \le t } ) ;
\end{equation} 
in other words, our process is non-anticipative and does only depend on the underlying noise up to time $t.$ 

Then we can write, for all continuous bounded functions $g,h$,
$$\espc{g(\limY^i)h(\limY^j)}{\W}=\espc{g(\Phi(\limY^i_0,\pi^i,W))h(\Phi(\limY^j_0,\pi^j,W))}{\W}=\psi(W),$$
where $\psi(w):=\esp{g(\Phi(\limY^i_0,\pi^i,w))h(\Phi(\limY^j_0,\pi^j,w))}=\esp{g(\Phi(\limY^i_0,\pi^i,w))}\esp{h(\Phi(\limY^j_0,\pi^j,w))}=:\psi_i(w)\psi_j(w)$. With the same reasoning, we show that $\espc{g(\limY^i)}{\W}=\psi_i(W)$ and $\espc{h(\limY^j)}{\W}=\psi_j(W)$.
The same arguments prove the mutual independence of $\limY^1,\ldots \limY^N$  conditionally to $W.$

$(iii)$ Using the representation $\limY^k_{|[0,t]}=\Phi_{t}(\limY^k_0,\pi^k,W) ,$  we can write for any continuous and bounded function $g : D([0,t],\r) \to \R,$ 
$$\int_\r g d (N^{-1}  \sum_{i=1}^N \delta_{\limY^i_{||0,t]}})  = \frac 1N \sum_{i=1}^N g (\limY^i_{|[0,t]})=\frac 1N \sum_{i=1}^N g \circ\Phi_t(\limY^i_0,\pi^i,W).$$
Using the law of large numbers on the account of the sequence of i.i.d. PRM's and working conditionally on $ W, $ we obtain that 
$$ \lim_{ N \to \infty} \int_\r g d (N^{-1}  \sum_{i=1}^N \delta_{\limY^i_{|[0,t]}}) = \esp{ g \circ\Phi_t(\limY^1_0,\pi^1,W) | \W }  =\esp{ g  ( \limY^1_{|[0,t]})  | \W  } =  \esp{ g ( \limY^1_{|[0,t]})  | (W_s)_{s \le t }  } ,$$
where we have used \eqref{eq:nonanticipatif}.  
\end{proof}

\subsection{Proof of Corollary \ref{cor:PDE}}

Applying Ito's formula, we have
\begin{multline}\label{eq:pde}
 \phi ( \limY_t ) = \phi ( \limY_0 ) + \int_0^t \left( -\alpha\phi' ( \limY_s)\limY_s+ \frac12 \phi'' ( \limY_s)  \mu_s ( f) \right)  ds + 
\int_0^t \phi' ( \limY_s) \sqrt{ \mu_s ( f) } d W_s \\
+ \int_{ [0, t ] \times \R_+ \times \R } \indiq_{\{ z \le f( \limY_{s-} \}} \left( \phi( 0 ) - \phi ( \limY_{s-} \right)  \pi ( ds, dz, du )   .
\end{multline}  
Since $ \phi', \phi''$ and $ f$ are bounded, it follows from~\eqref{controllimite} and Fubini's theorem that
\begin{multline*}
 \E \left( \int_0^t \big( -\alpha\phi' ( \limY_s)\limY_s + \frac12 \phi'' ( \limY_s)  \mu_s ( f) \big)  ds  | W \right) = \int_0^t E \left( -\alpha\phi' ( \limY_s)\limY_s + \frac12 \phi'' ( \limY_s)  \mu_s ( f) | W\right)  ds \\
 = \int_0^t  \int_\R \left( -\alpha\phi' (x)x + \frac12 \phi'' (x)  \mu_s ( f) \right)  \mu_s (dx)  ds .  
 \end{multline*}
Moreover, by independence of $ \limY_0 $ and $ W, $ $ \E ( \phi( \limY_0 ) | W  ) = \int_\R \phi ( x) \nu_0 (dx) .$ 

To deal with the martingale part in \eqref{eq:pde}, we use an Euler scheme to approximate the stochastic integral $I_t := \int_0^t \phi' ( \limY_s) \sqrt{ \mu_s ( f) } d W_s.$ For that sake, let $ t^n_k  : = k 2^{- n } t , 0 \le k \le 2^n,$ $n \geq 1,$ and define 
$$ I_t^n := \sum_{k=0}^{2^n - 1 } \phi ' ( \limY_{t_k^n } ) \Delta_k^n , \;  \Delta_k^n = \int_{t_k^n }^{t_{k+1}^n }    \sqrt{ \mu_s ( f) } d W_s ,$$
then $ \E ( |  I_t - I_t^n |^2 ) \to 0 $ as $ n \to \infty ,$ and therefore $ \E ( I_t^n | \W ) \to \E ( I_t | \W) $ in $L^2 ( P), $ as $ n \to \infty .$ But 
$$  \E ( I_t^n |\W ) = \sum_{k=0}^{2^n - 1 } \E ( \phi ' ( \limY_{t_k^n } ) | \W)   \Delta_k^n \to \int_0^t \E ( \phi' ( \limY_s) | \W)  \sqrt{ \mu_s ( f) } d W_s $$
in $L^2 ( P), $ since the sequence of processes $ Y^n_s :=   \sum_{k=0}^{2^n - 1 } \indiq_{] t_k^n , t_{k+1}^n ] } (s) \E ( \phi ' ( \limY_{t_k^n } ) | \W ) , 0 \le s \le t, $ converges in $L^2 ( \Omega \times [0, t ]) $ to $ \E ( \phi ' ( \limY_s) |\W ) .$ 

We finally deal with the jump part in \eqref{eq:pde}. Since $f$ is bounded, and by independence of $W$ and $\pi, $ we can rewrite this part in terms of an underlying Poisson process $N_t , $ independent of $W $ and having rate $ \| f\|_\infty , $ and in terms of i.i.d.
variables $(V_n)_{n\geq 1}$ uniformly distributed on $  [0, 1 ] ,$  independent of $W$ and of $ N$ as follows. 
$$
 \int_{ [0, t ] \times \R_+ \times \R } \indiq_{\{ z \le f( \limY_{s-} \}} \left( \phi( 0 ) - \phi ( \limY_{s-} \right)  \pi ( ds, dz, du ) =
\sum_{n=1}^{N_t} \indiq_{\{ \|f \|_\infty V_n \le f( \limY_{T_n- } ) \}} ( \phi(0) - \phi(  \limY_{T_n- } )) .
$$ 
Taking conditional expectation $ E ( \cdot | \W), $ we obtain 
\begin{multline*}
\E \left( \sum_{n=1}^{N_t} \indiq_{\{ \|f \|_\infty V_n \le f( \limY_{T_n- } ) \}} ( \phi(0) - \phi(  \limY_{T_n- } ))| \W \right) = \\
\E \left( \sum_{n=1}^{N_t} \frac{ f( \limY_{T_n- } )  }{\|f\|_\infty} ( \phi(0) - \phi(  \limY_{T_n- } ))| \W \right)\\
= \int_0^t \E \left( f( \limY_s) ( \phi(0) - \phi(  \limY_s))| \W \right) ds ,
\end{multline*}
where we have used the independence properties of $ (V_n)_n, N_t $ and $ W $ and the fact that conditionally on $ \{ N_t = n \}, $ the jump times $ (T_1, \ldots , T_n ) $ are distributed as the order statistics of $n$ i.i.d. times which are uniformly distributed on $ [0, t ].$ This concludes our proof.

\subsection{A priori estimates}

In this subsection, we prove useful  a priori upper bounds on some moments of the solutions of the SDEs~\eqref{eq:dyn}.

\begin{lem}
\label{estimate}
Assume that~\ref{control} holds and that $f$ is bounded:
\begin{itemize}
\item[(i)] for all $t>0, $ $\underset{N\in\n^*}{\sup}\underset{0\leq s\leq t}{\sup}\esp{\ll(X_{s}^{N,1}\rr)^2}<+\infty,$  
\item[(ii)] for all $t>0,\underset{N\in\n^*}{\sup}\esp{\underset{0\leq s\leq t}{\sup}\ll|X^{N,1}_s\rr|}<+\infty,$
\end{itemize}
\end{lem}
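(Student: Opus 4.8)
The plan is to prove (i) by applying It\^o's formula to $(X^{N,1}_t)^2$, and then to deduce (ii) from (i) together with a martingale estimate. For (i), I would first localize with the stopping times $\tau_M:=\inf\{t\geq 0:|X^{N,1}_t|>M\}$ --- which increase to $+\infty$ a.s.\ since the drift is Lipschitz and the jumps have finite activity --- apply It\^o's formula on $[0,t\wedge\tau_M]$ to $(X^{N,1}_s)^2$, and take expectations so that all martingale contributions drop out. The key point is that, after taking expectations, every term on the right-hand side is either nonpositive or bounded by $\sigma^2\normi{f}\,t$, uniformly in $N$: the drift $-\alpha X^{N,1}_s$ produces $-2\alpha\int_0^{t\wedge\tau_M}(X^{N,1}_s)^2\,ds\leq 0$; the reset jump of $\pi^1$ replaces $(X^{N,1}_{s-})^2$ by $0$ and therefore, after compensation, contributes $-\int_0^{t\wedge\tau_M}(X^{N,1}_s)^2 f(X^{N,1}_s)\,ds\leq 0$ since $f\geq 0$; and an interaction jump of $\pi^j$, $j\neq 1$, changes $(X^{N,1}_{s-})^2$ by $2X^{N,1}_{s-}u/\sqrt N+u^2/N$, whose compensator equals $\frac{\sigma^2}{N}\sum_{j\neq 1}\int_0^{t\wedge\tau_M}f(X^{N,j}_s)\,ds\leq\sigma^2\normi{f}\,t$ because $\int u\,d\nu(u)=0$ and $\int u^2\,d\nu(u)=\sigma^2$. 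This gives $\esp{(X^{N,1}_{t\wedge\tau_M})^2}\leq\esp{(X^{N,1}_0)^2}+\sigma^2\normi{f}\,t$, and Fatou's lemma as $M\to\infty$ yields (i), the right-hand side being $N$-independent thanks to $\esp{(X^{N,1}_0)^2}=\int x^2\,d\nu_0(x)<\infty$ (Assumption~\ref{control}).

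For (ii), I would start from \eqref{eq:dyn} and bound
\begin{multline*}
\sup_{s\leq t}|X^{N,1}_s|\leq|X^{N,1}_0|+\alpha\int_0^t|X^{N,1}_s|\,ds\\
+\int_{[0,t]\times\R_+\times\R}|X^{N,1}_{s-}|\indiq_{\{z\leq f(X^{N,1}_{s-})\}}\pi^1(ds,dz,du)+\sup_{s\leq t}|M^N_s|,
\end{multline*}
where $M^N_s:=N^{-1/2}\sum_{j\neq 1}\int_{[0,s]\times\R_+\times\R}u\,\indiq_{\{z\leq f(X^{N,j}_{r-})\}}\pi^j(dr,dz,du)$. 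Taking expectations: the first two terms are bounded by $1+\esp{(X^{N,1}_0)^2}$ and $\alpha\int_0^t(1+\esp{(X^{N,1}_s)^2})\,ds$, hence uniformly in $N$ by (i); the third term has expectation equal to that of its predictable compensator $\int_0^t|X^{N,1}_s|f(X^{N,1}_s)\,ds\leq\normi{f}\int_0^t|X^{N,1}_s|\,ds$, again finite and $N$-uniform by (i); and $M^N$ is a centred ($\int u\,d\nu(u)=0$) square-integrable martingale with $\esp{\langle M^N\rangle_t}=\frac{\sigma^2}{N}\sum_{j\neq 1}\esp{\int_0^t f(X^{N,j}_s)\,ds}\leq\sigma^2\normi{f}\,t$, so Burkholder--Davis--Gundy together with Jensen's inequality give $\esp{\sup_{s\leq t}|M^N_s|}\leq C\,\esp{\langle M^N\rangle_t}^{1/2}\leq C\sigma\normi{f}^{1/2}\sqrt t$. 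Summing these four bounds gives (ii); note that no Gr\"onwall argument is needed, since (i) already controls $\esp{|X^{N,1}_s|}$ inside the integrals.

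The step I expect to be the main obstacle is the reset (``big jump'') term in (ii): the naive bound of $\sup_{s\leq t}|X^{N,1}_s|$ times the number of spikes of neuron $1$ fails, the two factors being correlated. The fix I would use is to dominate the absolute value of the stochastic integral $\sup_{s\leq t}\bigl|\int_{[0,s]}X^{N,1}_{r-}\indiq_{\{z\leq f(X^{N,1}_{r-})\}}\pi^1\bigr|$ by the integral of the absolute values --- a nonnegative, nondecreasing process whose supremum in $s$ is attained at $t$ --- and then replace it by its predictable compensator, which is linear in $|X^{N,1}|$ and hence controlled by (i). In both parts the centredness of $\nu$ (Assumption~\ref{control}) is essential: it kills the cross term $2X^{N,1}_{s-}u/\sqrt N$ in the It\^o expansion of $(X^{N,1})^2$ and makes $M^N$ a genuine martingale, while the finiteness of $\int u^2\,d\nu(u)$ combined with the diffusive scaling $1/\sqrt N$ prevents the $\Theta(N)$ small jumps received by a single neuron from accumulating.
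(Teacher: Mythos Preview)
Your proof is correct and follows essentially the same route as the paper: It\^o's formula for $(X^{N,1})^2$ in~(i), discarding the nonpositive drift and reset contributions to reach the bound $\esp{(X^{N,1}_0)^2}+\sigma^2\|f\|_\infty t$, and then for~(ii) the same decomposition of $|X^{N,1}_t|$ into initial value, drift integral, reset-jump integral and the martingale $M^N$, with BDG (and Jensen) handling $\sup_s|M^N_s|$. Your explicit localization in~(i) and your discussion of why the reset term in~(ii) must be bounded via the integral of absolute values (then compensated) are more careful than the paper's presentation, but the argument is the same.
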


\begin{proof}
{\it Step~1:} Let us prove $(i)$.
\begin{multline*}
\ll(X_{t}^{N,1}\rr)^2= \ll(X_0^{N,1}\rr)^2-2\alpha\int_0^{t} \ll(X_s^{N,1}\rr)^2ds-\int_{[0,t]\times\r_+\times\r}\ll(X_s^{N,1}\rr)^2\uno{z\leq f\ll(X_{s-}^{N,1}\rr)}d\pi^j(s,z,u)\\
+\sum_{j=2}^N\int_{[0,t]\times\r_+\times\r}\ll[\ll(X_{s-}^{N,1}+\frac{u}{\sqrt{N}}\rr)^2-\ll(X_{s-}^{N,1}\rr)^2\rr]\uno{z\leq f\ll(X_{s-}^{N,j}\rr)}d\pi^j(s,z,u)\\
\leq \ll(X_0^{N,1}\rr)^2+\sum_{j=2}^N\int_{[0,t]\times\r_+\times\r}\ll[\ll(X_{s-}^{N,1}+\frac{u}{\sqrt{N}}\rr)^2-\ll(X_{s-}^{N,1}\rr)^2\rr]\uno{z\leq f\ll(X_{s-}^{N,j}\rr)}d\pi^j(s,z,u).
\end{multline*}
As $f$ is bounded,
$$\esp{\ll(X_{t}^{N,1}\rr)^2}\leq \esp{\ll(X_0^{N,1}\rr)^2}+\frac{\sigma^2}{N}\sum_{j=2}^N\int_0^t\esp{f\ll(X_{s}^{N,j}\rr)}ds\leq \esp{\ll(X_0^{N,1}\rr)^2}+\sigma^2||f||_\infty t.$$
{\it Step~2:} Now we prove $(ii)$.
\begin{align*}
\ll|X_t^{N,1}\rr|\leq& \ll|X_0^{N,1}\rr|+\alpha\int_0^t\ll|X_s^{N,1}\rr|ds+\int_{[0,t]\times\r_+\times\r}\ll|X^{N,1}_{s-}\rr|\uno{z\leq f(X^{N,1}_{s-})}d\pi^1(s,z,u)+\frac{1}{\sqrt{N}}|M_t^N| ,
\end{align*}
where $M_t^N$ is the martingale $M_t^N=\sum_{j=2}^N\int_{[0,t]\times\r_+\times\r}u\uno{z\leq f\ll(X^{N,j}_{s-}\rr)}d\pi^j(s,z,u).$ Then
\begin{multline*}
\underset{0\leq s\leq t}{\sup}\ll|X_s^{N,1}\rr|\leq \ll|X_0^{N,1}\rr|+\alpha\int_0^t|X_s^{N,1}|ds+\int_{[0,t]\times\r_+\times\r}\ll|X^{N,1}_{s-}\rr|\uno{z\leq f(X^{N,1}_{s-})}d\pi^1(s,z,u)\\
+\frac{1}{\sqrt{N}}\underset{0\leq s\leq t}{\sup}|M_s^N|.
\end{multline*}
To conclude the proof, it is now sufficient to notice that
$$\frac{1}{\sqrt{N}}\esp{\underset{0\leq s\leq t}{\sup}|M_s^N|}\leq \esp{\frac{1}{N}[M^N]_t}^{1/2}$$
is  uniformly bounded in $N$, since $f$ is bounded, and to use the point $(i)$ of the lemma.
\end{proof}

\section*{Acknowledgements}

This research has been conducted as part of FAPESP projectResearch, Innovation and Dissemination Center for Neuromathematics(grant 2013/07699-0).

\bibliography{biblio}
\end{document}